\theoremstyle{plain}
\newtheorem{thm}{Theorem}[section]
\newtheorem{cor}[thm]{Corollary}
\newtheorem{lem}[thm]{Lemma}
\newtheorem{prop}[thm]{Proposition}
\theoremstyle{definition}
\newtheorem{remark}[thm]{Remark}
\newtheorem{example}[thm]{Example}
\newtheorem{defn}[thm]{Definition}
\newtheorem{conjecture}[thm]{Conjecture}
\def\sg{\sigma}
\def\lm{\lambda}
\def\al{\alpha}
\def\l.l.o.{\it l.l.o}
\def\f{\varphi}
\def\chiup{\raise 2pt\hbox{$\chi$}}
\title{Kronecker multiplicities in the $(k,\ell)$ hook are polynomially bounded}
\author{Amitai Regev}
\begin{document}

\maketitle

{\bf Abstract}. The problem of decomposing the Kronecker product of $S_n$
characters is one of the last major open problems
in the ordinary representation theory of the symmetric group $S_n$.
Here we prove upper and lower polynomial bounds for the multiplicities of the 
Kronecker product $\chi^\lm\otimes\chi^\mu$,
where for some fixed $k$ and $\ell$
both partitions $\lm$ and $\mu$ are in the $(k,\ell)$ hook, 
%namely $\lm_{k+1},\,\mu_{k+1}\le\ell$.
$\lm$ and $\mu$ are partitions of $n$, and $n$ goes to infinity.

\medskip
~~~~~~~~~~2010 Mathematics Subject Classification:  Primary 20C30, Secondary 05A17

\section{Introduction}
We assume that the characteristic of the base field is zero: $char(F)=0$. As usual
 $S_n$ is the $n$-th symmetric group. Let $\lm$ be a partition of $n$, $\lm\vdash n$,
 then $\lm$ corresponds to the irreducible $S_n$ character $\chi^\lm$, and all the
 irreducible $S_n$ characters are of that form $\chi^\lm$~\cite{kerber, macdonald, sagan, stanley}.
 %We shall identify $\lm\equiv\chi^\lm$.
 Let $\f,\psi$ be two $S_n$ characters (same $n$).
 Their Kronecker --
 or {\it inner} tensor -- product $\f\otimes\psi$ is defined via $(\f\otimes\psi)(\sg)=\f(\sg)\cdot\psi(\sg)$
 where $\sg\in S_n$. Then $\f\otimes\psi$ is an $S_n$ character, and since $char(F)=0$, $\f\otimes\psi$
 is a (non-negative) integer combination of the irreducibles $\chi^\lm$.
In fact, the same construction and decomposition problem exist -- for any finite group.

\begin{defn}
Let $\lm,\mu\vdash n$, let 
$\chi^\lm\otimes\chi^\mu$ denote the Kronecker product of $\chi^\lm$ and $\chi^\mu$ and write
\begin{eqnarray}\label{october.9}
\chi^{\lm}\otimes \chi^{\mu}=\sum_{\rho\vdash n} \kappa(\lm,\mu,\rho)\cdot\chi^\rho.
\end{eqnarray}
This equation defines the multiplicities $\kappa(\lm,\mu,\rho)$.
Thus $\kappa(\lm,\mu,\rho)$ is the multiplicity of $\chi^\rho$ in $\chi^\lm\otimes\chi^\mu$.
We call the coefficients $\kappa(\lm,\mu,\rho)$ {\it the Kronecker multiplicities}.
\end{defn}

Algorithms for calculating the multiplicities $\kappa(\lm,\mu,\rho)$ are given for example
in~\cite{garsia},~\cite{kerber}. However, in the general case these algorithms become extremely involved.
We remark that the problem of computing these Kronecker multiplicities $\kappa(\lm,\mu,\rho)$ -- or
obtaining significant quantitative information about them --
is one of the last major
open problems in the ordinary representation theory of the symmetric groups.

\medskip

In this paper we consider the case where the partitions $\lm$ and $\mu$ are in the $k$-strip
$H(k,0)$, and more
generally -- in the $(k,\ell)$ hook $H(k,\ell)$. The partitions in the $k$-strip are denoted
\[
H(k,0;n)=\{\lm=(\lm_1,\lm_2,\ldots)\vdash n\mid \lm_{k+1}=0\},\,\quad\mbox{and}\quad H(k,0)=\bigcup_nH(k,0;n).
\]
Similarly, the partitions in the $(k,\ell)$-hook are denoted
\[
H(k,\ell;n)=\{\lm=(\lm_1,\lm_2,\ldots)\vdash n\mid \lm_{k+1}\le\ell\},\,
\quad\mbox{and}\quad H(k,\ell)=\bigcup_nH(k,\ell;n).
\]
We later apply the fact that as a function of $n$, the cardinality
$|H(k,\ell;n)|$ is polynomially bounded, see for
example~\cite[Theorem 7.3]{berele}.

\medskip
We mention here that these two distinct subsets of partitions, $H(k,0)$ and $H(k,\ell)$, play an 
important role in representation theory: 
By Schur's Double Centralizer Theorem, 
the partitions in $H(k,0)$ parametrize the irreducible polynomial representations of the 
General Linear Lie Group  $GL(k,\mathbb{C})$. And a similar role is played by the partitions in  
$H(k,\ell)$ and the irreducible representations of the General Linear
 Lie superalgebra $pl(k,\ell)$~\cite{berele}.

\medskip

The main results in this paper are  Theorem~\ref{main.theorem}, proved in  Section 4, and
Theorem~\ref{main.theorem.2} which is proved in Section~\ref{hook}. Theorem~\ref{main.theorem}
is a special case of Theorem~\ref{main.theorem.2}.
\begin{thm}\label{main.theorem}
Given $0<k\in\mathbb{Z}$, there exist $a=a(k)$, $b=b(k)$, satisfying the following condition:
For any $n$ and any partitions $\lm,\mu\in H(k,0;n)$  and $\rho\vdash n$, the multiplicities
$\kappa(\lm,\mu,\rho)$
of~\eqref{october.9} satisfy $\kappa(\lm,\mu,\rho)\le a\cdot n^b$. Namely, in the $k$-strip
these multiplicities $\kappa(\lm,\mu,\rho)$ are polynomially bounded.
\end{thm}
In Section~\ref{lower.3} we prove a lower bound for some multiplicities $\kappa(\lm,\lm,\nu)$,
a lower bound which grows as a polynomial of a rather large degree.

\medskip
In Section~\ref{hook} we hook-generalize Theorem~\ref{main.theorem} to the following theorem.

\begin{thm}\label{main.theorem.2}
Given $0\le k,\ell\in\mathbb{Z}$, there exist $a=a(k,\ell)$, $b=b(k,\ell)$, satisfying the following condition:
For any $n$ and any partitions $\lm,\mu\in H(k,\ell;n)$  and $\rho\vdash n$, the multiplicities
$\kappa(\lm,\mu,\rho)$
of~\eqref{october.9} satisfy $\kappa(\lm,\mu,\rho)\le a\cdot n^b$. Namely, in the $(k,\ell)$-hook
these multiplicities $\kappa(\lm,\mu,\rho)$ are polynomially bounded.
\end{thm}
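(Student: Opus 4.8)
The plan is to reduce the hook case $H(k,\ell)$ to the strip case $H(k,0)$ (Theorem~\ref{main.theorem}) via the duality/conjugation that sends a partition in the $(k,\ell)$-hook to a pair consisting of a strip part and a "transposed'' strip part. Concretely, if $\lm\in H(k,\ell;n)$, then $\lm$ decomposes along the $(k,\ell)$-hook into a piece $\alpha$ lying in $H(k,0)$ (the first $k$ rows) and a piece $\beta'$ where $\beta\in H(\ell,0)$ lies in the columns below row $k$; write $\lm=\alpha\cup\beta'$ with $|\alpha|+|\beta|=n$. The combinatorial model here is exactly the one underlying Berele--Regev hook Schur functions, and the key algebraic input is that the $S_n$-character $\chi^\lm$ can be realized inside a product of a ``row'' symmetric-group action and a ``column'' action, so that Kronecker coefficients $\kappa(\lm,\mu,\rho)$ in the hook are controlled by sums of products of strip Kronecker coefficients together with Littlewood--Richardson-type coefficients that glue the row and column contributions.

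First I would set up the precise statement of this decomposition: using the Littlewood--Richardson rule, $\chi^\lm = \sum_{\alpha,\beta} c^{\lm}_{\alpha,\beta'}\,(\chi^\alpha \cdot \chi^\beta)\!\uparrow$, restricted appropriately, where the sum is over $\alpha\in H(k,0)$, $\beta\in H(\ell,0)$. Then I would expand $\chi^\lm\otimes\chi^\mu$ using bilinearity of the Kronecker product over these decompositions, obtaining $\kappa(\lm,\mu,\rho)$ as a finite sum whose terms are products of: (i) Littlewood--Richardson coefficients $c^\lm_{\alpha,\beta'}$, $c^\mu_{\gamma,\delta'}$, $c^\rho_{\sigma,\tau'}$; (ii) strip Kronecker multiplicities $\kappa(\alpha,\gamma,\sigma)$ (bounded by $a_1 n^{b_1}$ by Theorem~\ref{main.theorem} applied in $H(k,0)$) and $\kappa(\beta,\delta,\tau)$ (bounded similarly in $H(\ell,0)$); and (iii) a bounded number of extra structure constants coming from the interaction of the Kronecker product with the induction product $S_p\times S_q \uparrow S_n$ — these are themselves built from Kronecker and LR coefficients of the component symmetric groups. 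The point is that \emph{every} factor in each term is a nonnegative integer, and the number of terms is polynomially bounded because the number of triples $(\alpha,\beta)$, $(\gamma,\delta)$, $(\sigma,\tau)$ with the partitions ranging over $H(k,0)$, $H(\ell,0)$ of size $\le n$ is polynomially bounded by \cite[Theorem 7.3]{berele} (the fact quoted in the introduction that $|H(k,\ell;n)|$ grows polynomially).

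The main obstacle I anticipate is point (iii): controlling the combinatorics of how the Kronecker product interacts with the row/column induction product. The Kronecker product of two induced characters $(\chi^\alpha\cdot\chi^\beta)\otimes(\chi^\gamma\cdot\chi^\delta)$ does not factor as a simple product; it involves summing over ways of splitting $n$ and over intermediate Kronecker products of the pieces (this is the ``Kronecker of a product'' formula, a consequence of Littlewood's theorem on $\chi^\lm\otimes(\chi^\mu\cdot\chi^\nu)$). I would need to show each structure constant appearing there is either bounded by a fixed constant (because it is a Kronecker or LR coefficient for a symmetric group $S_m$ with $m$ bounded — coming from the ``small'' hook pieces $\beta,\delta$ of bounded width) or is itself a strip Kronecker coefficient already covered by Theorem~\ref{main.theorem}. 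Once that bookkeeping is done, $\kappa(\lm,\mu,\rho)$ is a sum of at most $a_2 n^{b_2}$ terms, each a product of finitely many factors each bounded by $a_3 n^{b_3}$, giving the desired bound $\kappa(\lm,\mu,\rho)\le a n^b$ with $a=a(k,\ell)$, $b=b(k,\ell)$.
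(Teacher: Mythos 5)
Your overall strategy --- splitting a hook partition into a horizontal piece $\alpha\in H(k,0)$ and a vertical piece $\beta'$ with $\beta\in H(\ell,0)$, and reducing to the strip case via the behaviour of Kronecker products under induction --- is genuinely different from the paper's proof, which instead proves a conjugate (column) version of Dvir's recursive bound (Theorem~\ref{dvir.11}) and then alternates row-removal D-steps and column-removal D'-steps on $\rho$. As written, however, your plan has concrete gaps. First, the decomposition $\chi^\lm=\sum_{\alpha,\beta}c^\lm_{\alpha,\beta'}\,(\chi^\alpha\cdot\chi^{\beta'})\!\uparrow$ is false: the Littlewood--Richardson rule expands the induced product $\chi^\alpha\hat\otimes\chi^{\beta'}$ into irreducibles, not the other way around, and inverting that relation produces signed (inverse Kostka) coefficients, destroying the nonnegativity your counting argument relies on. What is true, and what you should use instead, is that for the canonical splitting $\alpha=(\lm_1,\dots,\lm_k)$, $\beta'=(\lm_{k+1},\lm_{k+2},\dots)$ one has $c^\lm_{\alpha,\beta'}\ge 1$, hence $\chi^\lm$ is dominated by $\chi^\alpha\hat\otimes\chi^{\beta'}$ and $\kappa(\lm,\mu,\rho)\le\langle(\chi^\alpha\hat\otimes\chi^{\beta'})\otimes(\chi^\gamma\hat\otimes\chi^{\delta'})\,,\,\chi^\rho\rangle$.

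Second, and more seriously, your item (iii) --- the interaction of the Kronecker product with the induction product --- is exactly the hard part, and the justification you sketch for it is wrong. The pieces $\beta,\delta$ are of bounded \emph{width} (at most $\ell$ columns for $\beta'$), not of bounded \emph{size}: $|\beta|$ can be of order $n$, so none of the resulting structure constants live in a symmetric group $S_m$ with $m$ bounded. Unwinding $(\chi^\alpha\hat\otimes\chi^{\beta'})\otimes(\chi^\gamma\hat\otimes\chi^{\delta'})$ via Mackey's formula produces, besides strip Kronecker coefficients covered by Theorem~\ref{main.theorem}, Littlewood--Richardson branching coefficients of partitions of unbounded size lying in fixed strips and in the fixed hook containing $\rho$. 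That these are polynomially bounded is itself one of the main lemmas of the paper (Lemma~\ref{october.13} and its hook analogue, Corollary~\ref{pb.3}); you neither prove this nor identify it as a needed input, and without it the claim that ``each factor is bounded by $a_3n^{b_3}$'' does not follow. So the proposal is a plausible alternative outline, but the decomposition it starts from is incorrect as stated and its central combinatorial step is left unproven.
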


One of the main tools for proving Theorem~\ref{main.theorem} is a
recursive formula for computing the multiplicities $\kappa(\lm,\mu,\rho)$, a formula
due to  Dvir~\cite[Theorem 2.3]{dvir}, and which yields a convenient upper bound for the
Kronecker multiplicities. To prove Theorem~\ref{main.theorem.2} we also need  -- and we prove -- a conjugate
version of that theorem of Dvir.

\medskip
The {\it outer} tensor product $\chi^\lm\hat\otimes\chi^\mu$, together with the Littlewood-Richardson
multiplicities $r(\lm,\mu,\nu)$, are introduced in Remark~\ref{preliminaries.1}.2. Another key tool in
proving Theorems~\ref{main.theorem} and~\ref{main.theorem.2} is the fact that in the strip and in the hook, the
multiplicities $r(\lm,\mu,\nu)$ are polynomially bounded. These properties are proved in Sections~\ref{outer}
and~\ref{hook}. We remark that
Dvir's formula~\cite[Theorem 2.3]{dvir} connects the Littlewood-Richardson and the 
Kronecker multiplicities, see~\eqref{dvir.10}.

\medskip
In Section~\ref{outside} we show that outside the hook the above Theorems~\ref{main.theorem} 
and~\ref{main.theorem.2}
fail.
In fact we show that outside the hook
some multiplicities $\kappa(\lm,\mu,\rho)$ can grow at least as fast as $\sqrt{n!}$.

\medskip

Finally we remark that it is of some interest to find out if similar phenomena -- of the multiplicities being polynomially bounded --
hold when the characteristic of the base field is finite.

\section{Preliminaries}\label{preliminaries}

%A partition $\lm$ corresponds to -- and can be identified with --  the
%Schur function $s_\lm$~\cite{kerber, macdonald, sagan}.
%Let $\lm$ be a partition of $n$, $\lm\vdash n$, and identify $\lm\equiv s_\lm\equiv\chi^{\lm}$.
The form  $<\lm,\mu>=<\chi^\lm,\chi^\mu>$ equals 1 if $\lm=\mu$, equals $0$ otherwise, 
and is extended to all characters of the symmetric groups by
bi-linearity~\cite[pg 114]{macdonald}.

\medskip
We list some facts that will be needed later.
\begin{remark}\label{preliminaries.1}
\begin{enumerate}

\item
Let $\lm=(\lm_1,\lm_2,\ldots)$ be a partition. Then $\ell(\lm)=k$ if $\lm_k>0$ and $\lm_{k+1}=0$.
For example, $\ell(\lm)\le k$ if and only if $\lm\in H(k,0)$.
\item
\begin{enumerate}

\item
Let $\f$ be a character of $S_m$, $\psi$ a character of $S_n$, with possibly $m\ne n$.
The {\it outer tensor product}
$\f\hat \otimes \psi$ is defined as follows:
$\f\times\psi$ is a character of $S_m\times S_n$, which is a subgroup of $S_{m+n}$. Inducing up,
we have
\[
\f\hat \otimes \psi=(\f\times\psi)\uparrow_{S_m\times S_n}^{S_{m+n}}.
\]
Let now $\f=\chi^\lm$ and $\psi=\chi^\mu$. Then 
$\chi^\lm\hat \otimes \chi^\mu$  is a character of $S_{m+n}$, and since $char(F)=0$, by complete reducibility
\begin{eqnarray}\label{october.30}
\chi^\lm\hat \otimes \chi^\mu=\sum_{nu\vdash n+m}r(\lm,\mu,\nu)\cdot\chi^\nu.
\end{eqnarray}
This equation defines the multiplicities $r(\lm,\mu,\nu)$.

\item

The evaluation of the multiplicities $r(\lm,\mu,\nu)$
is given by  the celebrated
Littlewood-Richardson rule, hence we call $r(\lm,\mu,\nu)$ {\it the Littlewood-Richardson multiplicities}.
In the special case that $\mu=(m)$, the decomposition of $\chi^\lm\hat\otimes \chi^{(m)}$ is
given by (the "horizontal") Young rule~\cite{kerber, macdonald, sagan}. The decomposition
of $\chi^\lm\hat\otimes \chi^{(1^m)}$ is given by the analogue "vertical" Young rule.

\item

{\it The "horizontal" Young rule}. Let $\lm=(\lm_1,\lm_2,\ldots)$ be a partition 
and $m\ge 0$ an integer. Let
$Par(\lm,m)$ denote the following set of partitions of $|\lm|+m$:
\[
Par(\lm,m)=\{\mu=(\mu_1,\mu_2,\ldots)\vdash |\lm|+m\mid \mu_1\ge\lm_1\ge \mu_2\ge\lm_2\ge\cdots\}.
\] 
then
\[
\chi^\lm\hat\otimes \chi^{(m)}=\sum_{\mu\in Par(\lm,m)}\chi^\mu.
\]

\end{enumerate}

\item
Let $\al,~\lm$ be partitions, $\al\subseteq\lm$ and consider the skew shape $\lm/\al$.
The corresponding $S_{|\lm|-|\al|}$ character $\chi^{\lm/\al}$
is defined as follows~\cite{macdonald}:
Let $\nu$ be a partition of $|\lm|-|\al|$, then
 \begin{eqnarray}\label{october.14}
 %< \lm/\al\,,\,\nu>=< \lm\,,\,\al\hat\otimes\nu>\quad\mbox{namely}\quad
 < \chi^{\lm/\al}\,,\,\chi^{\nu}>=< \chi^\lm\,,\,\chi^\al\hat\otimes\chi^\nu>.
 \end{eqnarray}
\[\mbox{Write}~~
\chi^\al\hat \otimes \chi^{\nu}=\sum_{\lm} r(\lm,\al,\nu)\cdot\chi^\lm,
~\mbox{then~\eqref{october.14} implies that}
~\chi^{\lm/\al}=\sum_\nu r(\lm,\al,\nu)\cdot\chi^\nu.
\]
Thus  the Littlewood-Richardson multiplicities $r(\lm,\al,\nu)$ 
also yield the decomposition of $\chi^{\lm/\al}$.
\item
 If $\chi^{\rho}$ is a component of $\chi^{\al}\hat \otimes \chi^{\nu}$ then
$\ell(\al),\ell(\nu)\le\ell(\rho)\le  \ell(\al)+\ell(\nu) $.
If $\al\subseteq\lm$ and $\chi^\nu$ is a component of  $\chi^{\lm/\al}$ then $r(\lm,\al,\nu)\ne 0$, so
$\chi^\lm$ is a component of
$\chi^{\al}\hat \otimes \chi^{\nu}$, hence $\ell(\nu)\le\ell(\lm)$.
\item
Let $\lm=(\lm_1,\lm_2,\ldots)$ and $\mu=(\mu_1,\mu_2,\ldots)$ be two partitions.
Then $\lm\cap\mu$ is the partition obtained by intersecting the two
corresponding Young diagrams. Thus
\[
(\lm\cap\mu)_i=min\{\lm_i,\mu_i\},~~i=1,2,\ldots.
\]
\end{enumerate}
\end{remark}

The following is a consequence of Young's rule.
\begin{lem}\label{november.3}
Let $\f$ be an $S_m$ character supported on $H(k-1,0)$:
\[
\f=\sum_{\mu\in H(k-1,0;m)}c_\mu\cdot\chi^\mu,
\]
and assume the multiplicities $c_\mu$ satisfy $c_\mu\le M$. Let $0<u$ and write
\[
\f\hat\otimes\chi^{(u)}=\sum_{\nu\in H(k,0;m+u)}d_\nu\cdot\chi^\nu.
\]
Then the multiplicities $d_\nu$ satisfy $d_\nu\le M\cdot (u+1)^k$.
\end{lem}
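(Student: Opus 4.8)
The plan is to expand everything by bilinearity and reduce the statement to an elementary counting problem. Applying the horizontal Young rule of Remark~\ref{preliminaries.1}.2(c) to each constituent, $\chi^\mu\hat\otimes\chi^{(u)}=\sum_{\nu\in Par(\mu,u)}\chi^\nu$, one gets
\[
\f\hat\otimes\chi^{(u)}=\sum_{\mu\in H(k-1,0;m)}c_\mu\sum_{\nu\in Par(\mu,u)}\chi^\nu ,
\]
so that, comparing with the given decomposition, $d_\nu=\sum_\mu c_\mu$, the sum running over those $\mu\in H(k-1,0;m)$ with $\nu\in Par(\mu,u)$. Since $0\le c_\mu\le M$, it suffices to show that for each fixed $\nu$ the number $N(\nu)$ of such $\mu$ satisfies $N(\nu)\le(u+1)^k$.

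To bound $N(\nu)$, unwind the definition: $\nu\in Par(\mu,u)$ means $|\nu|=m+u$ and $\nu_1\ge\mu_1\ge\nu_2\ge\mu_2\ge\cdots$. Because $\ell(\mu)\le k-1$ we have $\mu_k=0$, which forces $\nu_{k+1}=0$; hence $\nu$ has at most $k$ parts (in particular $\nu\in H(k,0;m+u)$, consistent with the statement), and $\mu$ is determined by $\mu_1,\dots,\mu_{k-1}$ alone. Set $x_i=\nu_i-\mu_i$ for $1\le i\le k-1$. The interlacing inequalities are exactly $0\le x_i\le\nu_i-\nu_{i+1}$, while counting boxes gives
\[
\sum_{i=1}^{k-1}x_i=\Big(\sum_{i=1}^{k-1}\nu_i\Big)-|\mu|=\big(|\nu|-\nu_k\big)-\big(|\nu|-u\big)=u-\nu_k\le u .
\]
In particular each $x_i$ lies in $\{0,1,\dots,u\}$, so there are at most $(u+1)^{k-1}$ choices for the tuple $(x_1,\dots,x_{k-1})$, and with it for $\mu$. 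Therefore $d_\nu\le M\,(u+1)^{k-1}\le M\,(u+1)^k$ (in fact slightly stronger than claimed).

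I do not anticipate a genuine obstacle: this is essentially bookkeeping once Young's rule is invoked. The two points that need a little care are (i) observing that the hypothesis $\mu\in H(k-1,0)$ is precisely what caps $\nu$ at $k$ parts and forces $x_k$ to equal $\nu_k$ rather than being free; and (ii) the displayed identity $\sum_{i=1}^{k-1}x_i=u-\nu_k$, which is what converts the individually unbounded ranges $[\nu_{i+1},\nu_i]$ for the $\mu_i$ into a uniform bound of $u+1$ choices per coordinate.
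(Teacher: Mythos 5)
Your proof is correct, and it follows the same overall strategy as the paper: expand $\f\hat\otimes\chi^{(u)}$ by bilinearity, apply the horizontal Young rule to each $\chi^\mu\hat\otimes\chi^{(u)}$, and bound $d_\nu$ by $M$ times the number of admissible $\mu$ for a fixed $\nu$. Where you differ is in how that count is bounded, and your version is actually the more careful one. The paper bounds the count by $L=\prod_{i=1}^{k}(\nu_i-\nu_{i+1}+1)$ and asserts that each factor satisfies $\nu_i-\nu_{i+1}\le u$ because ``$u\ge\sum_i(\nu_i-\nu_{i+1})$''; but that sum telescopes to $\nu_1$, which is in general much larger than $u$ (already for $\mu=(m)$, $\nu=(m+u)$ one has $\nu_1-\nu_2=m+u$). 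The quantity that genuinely is at most $u$ is not the gap $\nu_i-\nu_{i+1}$ but the strip width $x_i=\nu_i-\mu_i$, which is exactly what you use: the $x_i\ge0$ sum to $u-\nu_k\le u$ because $|\mu|=m$ is fixed, so each lies in $\{0,\dots,u\}$ and $\mu$ is pinned down by $(x_1,\dots,x_{k-1})$, giving the bound $(u+1)^{k-1}\le(u+1)^k$. In other words, your argument exploits the size constraint $|\mu|=m$ that the paper's product formula ignores, and in doing so it both repairs the flawed inequality in the paper's justification and yields a slightly sharper exponent. The two points you flag as needing care --- that $\mu\in H(k-1,0)$ forces $\nu_{k+1}=0$, and the identity $\sum_i x_i=u-\nu_k$ --- are indeed the crux.
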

\begin{proof}
We have
\[
\f\hat\otimes \chi^{(u)}=\sum_{\mu\in H(k-1,0;m)}c_\mu\cdot(\chi^\mu\hat\otimes \chi^{(u)})=
\sum_{\nu\in H(k,0;m+u)}d_\nu\cdot\chi^\nu.
\]
Let $\nu\in H(k,0;m+u)$ and denote by $L$ the number of partitions $\mu\in H(k-1,0;m)$ such that 
$\chi^\nu\in \chi^\mu\hat\otimes\chi^{(u)}$. Then the multiplicity $d_\nu$ equals the sum of $L$ multiplicities 
$c_\mu$. By Young's rule 
\[
L=(\nu_1-\nu_2+1)(\nu_2-\nu_3+1)\cdots (\nu_k-\nu_{k+1}+1)
\quad\mbox{and}\quad u\ge \sum_i(\nu_i-\nu_{i+1}).
\]
(where $\nu_{k+1}=0$). Now each 
$\nu_i-\nu_{i+1}\le u$, so $L\le (u+1)^k$ and $d_\nu\le M\cdot L\le M\cdot (u+1)^k$.
\end{proof}

We shall need the following properties.
\begin{remark}\label{october.31}
Let $\lm\in H(k,\ell;n).$ Then the number of sub-partitions $\al\subseteq\lm$ is $\le (n+1)^{k+\ell}$.
In particular, if  $\lm\in H(k,0;n)$ then the number of sub-partitions $\al\subseteq\lm$ is
$\le (n+1)^{k}$.

\end{remark}
\begin{proof}
For each $1\le i\le k$ there are $\le n+1$  possible values for $\al_i$, namely the values $0,1,\ldots,n$.
Similarly for the possible values of $\al_j'$, $1\le j\le \ell$, where
$\al'$ is the conjugate partition of $\al$.
\end{proof}

\begin{prop}\label{sheli}~\cite[Theorem 3.26.a]{berele}
Let $\lm\in H(k_1,\ell_1;n)$, $\mu\in H(k_2,\ell_2;n)$, and let $k=k_1\ell_1+k_2\ell_2$
and $\ell=k_1\ell_2+k_2\ell_1$. Then $\chi^\lm\otimes\chi^\mu$ is supported
on $H(k,\ell;n)$:
\begin{eqnarray}\label{general.1}
\chi^\lm\otimes\chi^\mu=\sum_{\nu\in H(k,\ell;n)} \kappa(\lm,\mu,\nu)\cdot\chi^\nu.
\end{eqnarray}

In particular
\begin{eqnarray}\label{general.2}
\chi^\lm\otimes \chi^\mu=\sum_{\nu\in H(h,0;n)} \kappa(\lm,\mu,\nu)\cdot\chi^\nu\quad\mbox{where}
\quad h=\ell(\lm)\cdot\ell(\mu).
\end{eqnarray}

\end{prop}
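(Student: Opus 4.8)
The plan is to obtain this from \emph{super Schur--Weyl duality} in the form due to Berele, the key extra observation being that the family of $S_n$-characters produced by that duality is closed under the Kronecker product, with a transparent accounting of the parameters. Fix a $\ZZ_2$-graded vector space $W=W_0\oplus W_1$ over $\CC$ with $\dim W_0=k$ and $\dim W_1=\ell$, and let $S_n$ act on $W^{\otimes n}$ by the sign-twisted permutation of the tensor factors (Koszul sign rule). Denote by $\chi_{k,\ell}$ the resulting $S_n$-character. Berele's duality says that the multiplicity of $\chi^\nu$ in $\chi_{k,\ell}$ equals the dimension of the irreducible covariant $\GL(k|\ell)$-module indexed by $\nu$, a number that is $>0$ precisely when $\nu\in H(k,\ell;n)$; hence $\chi_{k,\ell}$ is supported exactly on $H(k,\ell;n)$, and in particular $\chi^\lm$ is a constituent of $\chi_{k_1,\ell_1}$ whenever $\lm\in H(k_1,\ell_1;n)$.

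Next I would record the cycle-trace of $\chi_{k,\ell}$. Since the sign-twisted action respects the cycle decomposition, the trace factors over cycles, and on a single $j$-cycle the cyclic shift on $W^{\otimes j}$ has trace $\dim W_0+(-1)^{j-1}\dim W_1$; hence for $\sigma\in S_n$ with $m_j=m_j(\sigma)$ cycles of length $j$,
\[
\chi_{k,\ell}(\sigma)=\prod_{j\ge 1}\bigl(k+(-1)^{j-1}\ell\bigr)^{m_j}.
\]
Now let $W^{(1)},W^{(2)}$ have super-dimensions $(k_1,\ell_1)$ and $(k_2,\ell_2)$. The Kronecker product $\chi_{k_1,\ell_1}\otimes\chi_{k_2,\ell_2}$ is the pointwise product of the two characters, and from
\[
\bigl(k_1+(-1)^{j-1}\ell_1\bigr)\bigl(k_2+(-1)^{j-1}\ell_2\bigr)=(k_1k_2+\ell_1\ell_2)+(-1)^{j-1}(k_1\ell_2+k_2\ell_1)
\]
one reads off $\chi_{k_1,\ell_1}\otimes\chi_{k_2,\ell_2}=\chi_{k,\ell}$ with $k=k_1k_2+\ell_1\ell_2$ and $\ell=k_1\ell_2+k_2\ell_1$ — exactly the even and odd dimensions of the graded tensor product $W^{(1)}\otimes W^{(2)}$. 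Since $\lm\in H(k_1,\ell_1;n)$ and $\mu\in H(k_2,\ell_2;n)$, $\chi^\lm$ is a constituent of $\chi_{k_1,\ell_1}$ and $\chi^\mu$ of $\chi_{k_2,\ell_2}$; writing $\chi_{k_1,\ell_1}=\chi^\lm+\varphi$ and $\chi_{k_2,\ell_2}=\chi^\mu+\psi$ with $\varphi,\psi$ genuine characters and expanding the product, $\chi^\lm\otimes\chi^\mu$ is a constituent of $\chi_{k_1,\ell_1}\otimes\chi_{k_2,\ell_2}=\chi_{k,\ell}$. As $\chi_{k,\ell}$ is supported on $H(k,\ell;n)$, so is $\chi^\lm\otimes\chi^\mu$; that is, $\kappa(\lm,\mu,\nu)=0$ unless $\nu\in H(k,\ell;n)$, which is \eqref{general.1}.

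Statement \eqref{general.2} is the specialization $\ell_1=\ell_2=0$, $k_1=\ell(\lm)$, $k_2=\ell(\mu)$: then $\ell=0$ and $k=\ell(\lm)\ell(\mu)=h$, and the whole argument collapses to ordinary (ungraded) Schur--Weyl duality, the signs disappearing and the $S_n$-equivariant isomorphism $V^{\otimes n}\otimes V'^{\otimes n}\cong(V\otimes V')^{\otimes n}$ doing the bookkeeping directly. The only substantial external ingredient is Berele's duality theorem; granting it, the remaining content — that $\{\chi_{k,\ell}\}$ is closed under pointwise product and how its parameters combine — is the short computation above, so the main thing to be careful about is pinning down the grading and sign conventions so that the single-cycle trace is indeed $k+(-1)^{j-1}\ell$ and the product of two such expressions is again of that shape.
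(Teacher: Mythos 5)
Your argument is correct and is essentially the standard proof of the cited result: the paper itself gives no proof here (it quotes \cite[Theorem 3.26.a]{berele}), and your reconstruction via the signed permutation action on $W^{\otimes n}$, the cycle-type character formula $\prod_j(k+(-1)^{j-1}\ell)^{m_j}$, and the closure of these characters under pointwise product is exactly the Berele--Regev mechanism. Note that your computation also exposes a typo in the statement as printed: the even parameter should be $k=k_1k_2+\ell_1\ell_2$ (as you derive), not $k_1\ell_1+k_2\ell_2$; this is confirmed both by the special case~\eqref{general.2} (where $\ell_1=\ell_2=0$ forces $h=k_1k_2$) and by the paper's later use of the $(k^2+\ell^2,2k\ell)$ hook in Section~\ref{hook}.
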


For an interesting refinement of~\eqref{general.2} -- see~\cite{dvir}.

\section{Polynomial bounds in the strip for the Littlewood-Richardson multiplicities}\label{outer}
In this section we prove polynomial bounds for the Littlewood-Richardson multiplicities in the strip.
In section~\ref{hook} we indicate how to extend this to the hook.

\subsection{Polynomial upper bounds in the strip}

\begin{lem}\label{october.13}
Given non-negative integers $k_1,k_2$, there exist $a=a(k_1,k_2),~b=b(k_1,k_2)$ satisfying the
following condition:

\medskip
Let $\lm\in H(k_1,0),$  $\mu\in H(k_2,0),$
and as in Equation~\eqref{october.30},
  the Littlewood-Richardson multiplicities $ r (\lm,\mu,\nu)$ are defined by
\[
\chi^\lm\hat\otimes \chi^\mu=\sum_{\nu\vdash |\lm|+|\mu|}  r (\lm,\mu,\nu)\cdot\chi^\nu\qquad\mbox{(so $\nu\in H(k_1+k_2,0)$:
$ r (\lm,\mu,\nu)=0$ if $\ell(\nu)>k_1+k_2$)}.
\]
Then these multiplicities satisfy $ r (\lm,\mu,\nu)\le a\cdot (|\lm|+|\mu|)^b$. Namely,
in the strip, the multiplicities of $\chi^\lm\hat\otimes \chi^\mu$ are polynomially bounded.
\end{lem}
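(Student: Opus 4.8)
The plan is to prove Lemma~\ref{october.13} by induction on $k_2 = \ell(\mu)$, peeling off one row of $\mu$ at a time and using the horizontal Young rule together with Lemma~\ref{november.3}. The base case $k_2 = 0$ is trivial: then $\mu$ is the empty partition and $\chi^\lm\hat\otimes\chi^\mu = \chi^\lm$, so all multiplicities are $0$ or $1$.

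For the inductive step, write $\mu = (\mu_1, \mu_2, \ldots, \mu_{k_2})$ and let $\bar\mu = (\mu_2, \ldots, \mu_{k_2})$, a partition in $H(k_2-1, 0)$ with $|\bar\mu| = |\mu| - \mu_1 \le |\mu|$. The idea is to relate $\chi^\lm\hat\otimes\chi^\mu$ to $(\chi^\lm\hat\otimes\chi^{\bar\mu})\hat\otimes\chi^{(\mu_1)}$. Since the outer tensor product is associative, and since by the Littlewood--Richardson rule $\chi^{\bar\mu}\hat\otimes\chi^{(\mu_1)}$ contains $\chi^\mu$ exactly once (the Young-rule shape with a full row of length $\mu_1$ added on top is the unique way to get $\mu_1 \ge \mu_2$, actually one should instead add the row at the bottom: $\chi^{\bar\mu}\hat\otimes\chi^{(\mu_1)} = \chi^{(\mu_1,\mu_2,\ldots)} + (\text{other terms, all with first part} > \mu_1)$ only when $\mu_1 \ge \mu_2$, which does hold), we get that $\chi^\mu$ appears in $\chi^{\bar\mu}\hat\otimes\chi^{(\mu_1)}$ with coefficient $1$. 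Therefore $\chi^\lm\hat\otimes\chi^\mu$ is a summand of $\chi^\lm\hat\otimes\chi^{\bar\mu}\hat\otimes\chi^{(\mu_1)} = (\chi^\lm\hat\otimes\chi^{\bar\mu})\hat\otimes\chi^{(\mu_1)}$, so every multiplicity $r(\lm,\mu,\nu)$ is bounded above by the corresponding multiplicity of $\chi^\nu$ in $(\chi^\lm\hat\otimes\chi^{\bar\mu})\hat\otimes\chi^{(\mu_1)}$.

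Now by induction, $\chi^\lm\hat\otimes\chi^{\bar\mu}$ is a character supported on $H(k_1 + k_2 - 1, 0)$ whose multiplicities are bounded by $a' \cdot (|\lm|+|\bar\mu|)^{b'} \le a'\cdot(|\lm|+|\mu|)^{b'}$ for constants $a' = a(k_1, k_2-1)$, $b' = b(k_1, k_2-1)$. Apply Lemma~\ref{november.3} with $\f = \chi^\lm\hat\otimes\chi^{\bar\mu}$, $k = k_1+k_2$, $M = a'\cdot(|\lm|+|\mu|)^{b'}$, and $u = \mu_1 \le |\mu| \le |\lm|+|\mu|$: the multiplicities of $\f\hat\otimes\chi^{(\mu_1)}$ are bounded by $M\cdot(\mu_1+1)^{k_1+k_2} \le a'\cdot(|\lm|+|\mu|)^{b'}\cdot(|\lm|+|\mu|+1)^{k_1+k_2}$, which is of the form $a\cdot(|\lm|+|\mu|)^b$ for suitable $a = a(k_1,k_2)$, $b = b(k_1,k_2) = b' + k_1 + k_2$. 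This bounds $r(\lm,\mu,\nu)$ as required.

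The one point that needs care — and is really the only subtlety — is the claim that $\chi^\mu$ occurs with multiplicity exactly $1$ (or at least with nonzero multiplicity) in $\chi^{\bar\mu}\hat\otimes\chi^{(\mu_1)}$, so that $\chi^\lm\hat\otimes\chi^\mu$ genuinely embeds as a sub-character of the triple product. This follows from the horizontal Young rule stated in Remark~\ref{preliminaries.1}.2(c): $\mu \in Par(\bar\mu, \mu_1)$ precisely because $\mu_1 \ge \mu_2 = \bar\mu_1$, $\mu_2 \ge \mu_3 = \bar\mu_2$, and so on, so $\chi^\mu$ is one of the summands of $\chi^{\bar\mu}\hat\otimes\chi^{(\mu_1)}$. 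Hence $\langle \chi^\lm\hat\otimes\chi^\mu, \chi^\nu\rangle \le \langle \chi^\lm\hat\otimes\chi^{\bar\mu}\hat\otimes\chi^{(\mu_1)}, \chi^\nu\rangle$ for every $\nu$, since both sides are non-negative-integer combinations and the left character is a summand of the right. The bookkeeping of the constants is routine; the induction on $\ell(\mu)$ combined with Lemma~\ref{november.3} is the engine of the proof.
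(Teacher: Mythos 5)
Your proof is correct and follows essentially the same route as the paper: induction on $\ell(\mu)$, embedding $\chi^\lm\hat\otimes\chi^\mu$ into $\chi^\lm\hat\otimes\chi^{\bar\mu}\hat\otimes\chi^{(\text{row})}$ via the horizontal Young rule, and invoking Lemma~\ref{november.3} to control the multiplicities. The only (immaterial) difference is that you peel off the first row $\mu_1$ while the paper peels off the last row $\mu_{k_2}$, which lets the paper get the slightly cleaner identity $|\lm|+|\bar\mu|+\mu_{k_2}=|\lm|+|\mu|$ in the constant bookkeeping; your bound $\mu_1+1\le|\lm|+|\mu|+1$ still yields a polynomial bound, so nothing is lost.
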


\begin{proof}
 The proof is by double induction: the first case is $(k_1,k_2)$ where $k_1$ is arbitrary and $k_2=0$.
 Then in the general case $k_2>0$, we assume true for the pair ($k_1,k_2-1$), and prove for
 ($k_1,k_2$).

 \medskip
 Note first that in the  case $k_1$ arbitrary and $k_2=0$, these multiplicities $ r (\lm,\mu,\nu)$ 
 are 1 and 0, so
 there is nothing to
 prove. Indeed, $k_2=0$ implies that $\mu$ is the empty partition $\mu=\emptyset$, then
 $\chi^\lm\hat\otimes \chi^\mu=\chi^\lm,$
 hence $r(\lm,\emptyset,\lm)=1$ and $r(\lm,\emptyset,\nu)=0$ if $\nu\ne\lm$.

\medskip
We proceed with the general case.

\medskip
By induction on the pair $(k_1, k_2-1)$, there exist $a_2,b_2>0$ satisfying the following condition:
Let $\ell(\lm)\le k_1$ and $\ell(\rho)\le k_2-1$ and write
$\chi^\lm\hat\otimes\chi^\rho=\sum_\theta r(\lm,\rho,\theta)\cdot\chi^\theta$,
then the multiplicities $r(\lm,\rho,\theta)$ satisfy  $r(\lm,\rho,\theta)\le a_2(|\lm|+|\rho|)^{b_2}$.

\medskip
It is given that $\lm\in H(k_1,0)$ and $\mu\in H(k_2,0)$.
Denote $k_2=k$, so $\mu=(\mu_1,\ldots,\mu_k)$, and denote $\bar\mu=(\mu_1,\ldots,\mu_{k-1})$. 
We have  $\chi^\lm\hat\otimes\chi^{\bar\mu}=\sum_\theta  r (\lm,\bar\mu,\theta)\cdot\chi^\theta$, then
by induction all $ r (\lm,\bar\mu,\theta)\le a_2\cdot (|\lm|+|\bar\mu|)^{b_2}$.
As in~\eqref{october.30} we write
\[
\chi^\lm\hat\otimes\chi^\mu=\sum_\nu r(\lm,\mu,\nu)\cdot\chi^\nu\quad\mbox{and we also denote}\quad
\chi^\lm\hat\otimes\chi^{\bar\mu}\hat\otimes \chi^{(\mu_k)}=\sum_\nu w(\lm,\mu,\nu)\cdot\chi^\nu.
\]
By Young's rule
$\chi^\mu$ is a component of $\chi^{\bar\mu}\hat\otimes\chi^{(\mu_k)}$ 
and therefore
$ \,r(\lm,\mu,\nu)\le w(\lm,\mu,\nu)$.
Apply now Lemma~\ref{november.3} with $\f=\chi^\lm\hat\otimes \chi^{\bar\mu}$, $u=\mu_k$, 
and $M=a_2\cdot (|\lm|+|\bar\mu|)^{b_2}$.
Each component $\chi^\nu$ of $\chi^\lm\hat\otimes\chi^{\bar\mu}$ is of length
$\ell(\nu)\le \ell(\lm)+\ell(\bar\mu)\le k_1+k_2-1$.
By Lemma~\ref{november.3}  the multiplicities $w(\lm,\mu,\nu)$ satisfy 
$w(\lm,\mu,\nu)\le a_2\cdot (|\lm|+|\bar\mu|)^{b_2}\cdot(\mu_k+1)^{k_1+k_2}$.

\medskip
For any non-negative integers $c,d,r$ and $s$,
~$r^c\cdot (s+1)^d\le (r+s)^{c+d}$, hence
\[
 r (\lm,\mu,\nu)\le w(\lm,\mu,\nu)\le a_2\cdot(|\lm|+|\bar\mu|)^{b_2}\cdot 
 (\mu_k+1)^{k_1+k_2}\le~~~~~~~~~~~~~~~~~~~~~~
 \]
 \[~~~~~~~~~~~~~~~~~~~~~~~~~~~~~~~~~
 \le a_2\cdot(|\lm|+|\bar\mu|+\mu_k)^{b_2+k_1+k_2}=a_2\cdot(|\lm|+|\mu|)^{b_2+k_1+k_2}.
\]
With $a=a_2$ and $b=b_2+k_1+k_2$ the proof of the lemma is now complete.
\end{proof}

\begin{cor}\label{pb.1}
Given $k$, there exist $a=a(k)$, $b=b(k)$ satisfying the following condition:
Let $\lm\in H(k,0)$, let $\al\subseteq \lm$ and write $\chi^{\lm/\al}=
\sum_\rho r(\lm,\al,\rho)\cdot\chi^\rho$,
then all $r(\lm,\al,\rho)\le a\cdot |\lm|^b.$  Namely the multiplicities $r(\lm,\al,\rho)$
are polynomially bounded.
 Moreover $r(\lm,\al,\rho)=0$ if $\ell(\rho)>\ell(\lm)$ or similarly  if $\rho_1>\lm_1$.
In particular $\chi^{\lm/\al}=\sum_{\rho \in H(k,0)}r(\lm,\al,\rho)\cdot\chi^\rho$.
\end{cor}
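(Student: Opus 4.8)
The plan is to deduce Corollary~\ref{pb.1} directly from Lemma~\ref{october.13} together with the character-theoretic identities collected in Remark~\ref{preliminaries.1}. Recall from Remark~\ref{preliminaries.1}.3 that the skew character decomposes as $\chi^{\lm/\al}=\sum_\nu r(\lm,\al,\nu)\cdot\chi^\nu$, where the \emph{same} integers $r(\lm,\al,\nu)$ are the Littlewood-Richardson multiplicities appearing in $\chi^\al\hat\otimes\chi^\nu=\sum_\lm r(\lm,\al,\nu)\cdot\chi^\lm$. So bounding the coefficients of $\chi^{\lm/\al}$ is literally the same as bounding certain Littlewood-Richardson multiplicities $r(\lm,\al,\nu)$.

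First I would fix $\lm\in H(k,0)$ and a sub-partition $\al\subseteq\lm$. Since $\ell(\al)\le\ell(\lm)\le k$, we have $\al\in H(k,0)$ as well. Now I would like to invoke Lemma~\ref{october.13} with the pair $(k_1,k_2)=(k,k)$: it supplies constants $a_0=a_0(k,k)$, $b_0=b_0(k,k)$ such that for all $\al,\nu\in H(k,0)$ the multiplicity $r(\lm,\al,\nu)$ in $\chi^\al\hat\otimes\chi^\nu$ is at most $a_0\cdot(|\al|+|\nu|)^{b_0}$. Here $|\al|+|\nu|=|\al|+(|\lm|-|\al|)=|\lm|$, since $\nu$ is a partition of $|\lm|-|\al|$. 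Hence every coefficient $r(\lm,\al,\nu)$ of $\chi^{\lm/\al}$ satisfies $r(\lm,\al,\nu)\le a_0\cdot|\lm|^{b_0}$, and we may take $a=a(k)=a_0(k,k)$, $b=b(k)=b_0(k,k)$. One small point to address carefully: Lemma~\ref{october.13} as stated bounds the multiplicities of $\chi^\al\hat\otimes\chi^\nu$, i.e.\ the role of ``$\mu$'' there is played by $\nu$; the two partitions being tensored both lie in $H(k,0)$, and the third (the one whose coefficient we read off), namely $\lm$, lies in $H(2k,0)$ automatically by Remark~\ref{preliminaries.1}.4 — but that causes no trouble, because the bound $a\cdot(|\al|+|\nu|)^b$ in Lemma~\ref{october.13} holds for \emph{all} $\nu$, whatever the length of $\lm$ ends up being.

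For the vanishing statements: if $\ell(\rho)>\ell(\lm)$ then by Remark~\ref{preliminaries.1}.4 (second half), $\chi^\rho$ cannot be a component of $\chi^{\lm/\al}$, so $r(\lm,\al,\rho)=0$. Conjugating diagrams (equivalently, using the sign-twist $\chi^{\lm'/\al'}=\operatorname{sgn}\cdot\chi^{\lm/\al}$, so $r(\lm,\al,\rho)=r(\lm',\al',\rho')$) transfers the length bound $\ell(\rho')\le\ell(\lm')$ into the width bound $\rho_1\le\lm_1$, giving $r(\lm,\al,\rho)=0$ whenever $\rho_1>\lm_1$. In particular every $\rho$ with $r(\lm,\al,\rho)\ne 0$ has $\ell(\rho)\le\ell(\lm)\le k$, i.e.\ $\rho\in H(k,0)$, so $\chi^{\lm/\al}=\sum_{\rho\in H(k,0)}r(\lm,\al,\rho)\cdot\chi^\rho$, as claimed.

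I do not expect any genuine obstacle here — Corollary~\ref{pb.1} is essentially a restatement of Lemma~\ref{october.13} specialized via the skew-character/LR identity of Remark~\ref{preliminaries.1}.3, with the observation $|\al|+|\nu|=|\lm|$ doing the real work. The only thing requiring a line of care is making sure the constants come from the \emph{symmetric} pair $(k,k)$ rather than $(k,0)$, since $\al$ can genuinely have length up to $k$ (it need not be small), and noting that the length/width vanishing is immediate from the ``components of an outer product'' bounds already recorded in Remark~\ref{preliminaries.1}.4.
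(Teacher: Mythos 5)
Your proposal is correct and follows essentially the same route as the paper: identify the skew multiplicities $r(\lm,\al,\rho)$ with the Littlewood--Richardson multiplicities of $\chi^\lm$ in $\chi^\al\hat\otimes\chi^\rho$, use Remark~\ref{preliminaries.1}.4 to reduce to $\rho\in H(k,0)$, and then apply Lemma~\ref{october.13} with the pair $(k,k)$ together with the observation $|\al|+|\rho|=|\lm|$. No substantive differences.
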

\begin{proof}
Let $k=k_1=k_2$, let $a(k,k)$ and $b(k,k)$ as in Lemma~\ref{october.13} and let $a=a(k,k)$ and
$b=b(k,k)$.

\medskip
We show first that if $\ell(\rho)>\ell(\lm)$ then
$r(\lm,\al,\rho)=0$. Indeed, let $\chi^{\lm/\al}=\sum_\rho r(\lm,\al,\rho)\cdot\chi^\rho$, then
 \[
 r(\lm,\al,\rho)=<\chi^{\lm/\al}\,,\,\chi^\rho>=<\chi^\lm\,,\,\chi^{\al}\hat\otimes\chi^{ \rho}>.
 \]
 If $\chi^\theta$ is a component of $\chi^{\al\bar\otimes \rho}$ then by Remark~\ref{preliminaries.1}.4
 $\ell(\theta)\ge \ell(\rho)>\ell(\lm),$ so $\theta\ne\lm$; hence $<\chi^\lm\,,\,\chi^\theta>=0$,
 so $0=<\chi^\lm,\chi^{\al}\hat\otimes\chi^{\rho}>=r(\lm,\al,\rho)$, namely
 $r(\lm,\al,\rho)=0$. Thus we can assume that $\rho\in H(k,0)$.
Since $\al\subseteq \lm$,  also $\al\in H(k,0)$. The multiplicities
$r(\lm,\al,\rho)$ are also the multiplicities of the irreducibles $\chi^\lm$ in $\chi^{\al\hat\otimes\rho}$, and since
$\al,\rho\in H(k,0)$, by Lemma~\ref{october.13} \[
r(\lm,\al,\rho)\le a\cdot(|\al|+|\rho|)^b=a\cdot(|\lm|)^b.
\]
\end{proof}

\subsection{Polynomial lower bounds in the strip}
%\begin{remark} 
The following is an example of a polynomial lower bound:

\medskip
Let $\lm=\mu=(2m,m)$, so $n=3m$, and let $\nu=(3m,2m,m)$. By direct calculations 
with the Littlewood--Richardson rule one deduces that $r(\lm,\lm,\nu)\ge m+1$, which is a polynomial
lower bound.

\medskip
This indicates that Lemma~\ref{october.13},  essentially, cannot be improved.

%\end{remark}

\section{Polynomial bounds for the Kronecker multiplicities in the strip}\label{kronecker}

%Let $\lm,\mu,\rho$ be three partitions of $n$, and

We now prove a polynomial
upper bound for the Kronecker multiplicities
 $\kappa(\lm,\mu,\rho)$ (see~\eqref{october.9}), where $\lm,\mu\in H(k,0;n)$ and $\rho\vdash n$.
In Section~\ref{hook} we extend this to the hook $H(k,\ell;n)$.
We also prove here a polynomial lower bound for some $\kappa(\lm,\mu,\rho)$.

\subsection{A polynomial upper bound in the strip}

Theorem 2.3 in~\cite{dvir} gives a recursive formula for calculating $\kappa(\lm,\mu,\rho)$.
Discarding the negative term in that formula in~\cite{dvir}, it implies the following upper
bound for $\kappa(\lm,\mu,\rho)$.

\begin{thm}\label{dvir.1}
Let $\lm,\mu,\rho\vdash n$, and as in~\eqref{october.9} let
$
\chi^\lm\otimes \chi^\mu=\sum_{\rho\vdash n} \kappa(\lm,\mu,\rho)\cdot\chi^\rho,
$
then
\begin{eqnarray}\label{dvir.10}
\kappa(\lm,\mu,\rho)\le \sum_{\al\vdash \rho_1,~\al\subseteq \lm\cap\mu}
< \chi^{\lm/\al}\otimes \chi^{\mu/\al}\;, \;\chi^{(\rho_2,\rho_3,\ldots)}  >.
\end{eqnarray}
\end{thm}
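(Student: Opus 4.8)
The statement to prove is the upper bound in Theorem~\ref{dvir.1}, and the plan is to recover it directly from Dvir's recursion in~\cite[Theorem 2.3]{dvir} by simply discarding the subtracted term. First I would recall the precise form of Dvir's identity. For $\lm,\mu,\rho\vdash n$, set $d=\rho_1$ and $\bar\rho=(\rho_2,\rho_3,\ldots)$, so $\bar\rho\vdash n-d$. Dvir's theorem expresses $\kappa(\lm,\mu,\rho)$ as an alternating expression built from the ``defect'' of the first row: loosely,
\[
\kappa(\lm,\mu,\rho)=\sum_{\substack{\al\vdash d\\ \al\subseteq\lm\cap\mu}}\big\langle\chi^{\lm/\al}\otimes\chi^{\mu/\al},\chi^{\bar\rho}\big\rangle-(\text{a non-negative correction term}),
\]
where the correction term accounts for overcounting coming from partitions $\al$ that are too large, i.e. it is itself a non-negative integer combination of inner products of characters (equivalently, a non-negative integer because each such inner product is the multiplicity of an irreducible in a genuine character). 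The key structural facts I need from the preliminaries are: (i) for any $\al\subseteq\lm$ the skew character $\chi^{\lm/\al}$ is a genuine (non-negative-integer-combination) character, by Remark~\ref{preliminaries.1}.3; (ii) the inner tensor product $\chi^{\lm/\al}\otimes\chi^{\mu/\al}$ of two genuine characters is again a genuine character, since $char(F)=0$; and hence (iii) each term $\langle\chi^{\lm/\al}\otimes\chi^{\mu/\al},\chi^{\bar\rho}\rangle$ is a non-negative integer.

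With those facts in hand, the argument is short. Since the correction term in Dvir's formula is non-negative, dropping it can only increase the right-hand side, so
\[
\kappa(\lm,\mu,\rho)\le\sum_{\substack{\al\vdash\rho_1\\ \al\subseteq\lm\cap\mu}}\big\langle\chi^{\lm/\al}\otimes\chi^{\mu/\al},\chi^{(\rho_2,\rho_3,\ldots)}\big\rangle,
\]
which is exactly~\eqref{dvir.10}. I would also note that the sum is implicitly finite and well defined: $\al$ ranges over the (finitely many) sub-partitions of $\lm\cap\mu$ of size $\rho_1$, and for each such $\al$ only finitely many $\bar\rho$ contribute; when $\rho_1>|\lm\cap\mu|$ the sum is empty and the bound reads $\kappa(\lm,\mu,\rho)\le 0$, consistent with the fact that in that case $\kappa(\lm,\mu,\rho)=0$ (this is the boundary case of Dvir's theorem and a sanity check on the statement rather than something needing separate proof here).

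The only real ``obstacle'' is bookkeeping: one must state Dvir's recursion accurately and verify that the term being discarded is genuinely a non-negative quantity rather than merely an alternating sum with no fixed sign. I would handle this by invoking the form of \cite[Theorem 2.3]{dvir} in which the subtracted term is exhibited as a multiplicity (a coefficient in an honest character decomposition), so its non-negativity is immediate; if instead one uses the raw recursive form, I would first establish by an easy induction on $\rho_1$ that the quantity subtracted is a non-negative integer, using again that skew characters and inner products of genuine characters have non-negative coefficients. Either way, no computation beyond this sign check is needed, and the theorem follows. $\square$
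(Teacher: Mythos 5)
Your proposal is correct and follows exactly the route the paper takes: the paper gives no proof beyond the remark that Theorem 2.3 of Dvir expresses $\kappa(\lm,\mu,\rho)$ as the sum in~\eqref{dvir.10} minus a manifestly non-negative correction (a non-negative integer combination of Kronecker coefficients), so discarding it yields the inequality. Your added sign check and the observation about the empty-sum boundary case are sound but not needed beyond what the citation already supplies.
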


{\bf The proof of Theorem~\ref{main.theorem}:}

\begin{proof}
By assumption $\lm,\mu\in H(k,0;n)$ and $\rho\vdash n$. By Proposition~\ref{sheli}
$\kappa(\lm,\mu,\rho)=0$ if $\ell(\rho)>k^2$ , hence in~\eqref{dvir.10} we can assume that $\ell(\rho)\le k^2$,
namely $\rho=(\rho_1,\ldots,\rho_{k^2})$.
By Remark~\ref{october.31}, since $\lm\in H(k,0;n)$,
in~\eqref{dvir.10} the number of sub-partitions $\al$, $\al\subseteq\lm\cap \mu\subseteq\lm$ is
$\le (n+1)^k$, which is
polynomial.  Hence
suffices to show that each summand
\[
< \chi^{\lm/\al}\otimes \chi^{\mu/\al}\;, \;\chi^{(\rho_2,\rho_3,\ldots,\rho_{k^2})}  >
\]
is polynomially bounded.

\medskip
By Corollary~\ref{pb.1} for each skew shape $\lm/\al$,  $\chi^{\lm/\al}$  is a sum
$\chi^{\lm/\al}=\sum_\pi r(\lm,\al,\pi)\cdot\chi^\pi$ where the $r(\lm,\al,\pi)$ are polynomially bounded
(polynomial in $n=|\lm|$). Moreover by Remark~\ref{preliminaries.1}.4, in that sum
$\chi^{\lm/\al}=\sum_\pi r(\lm,\al,\pi)\cdot\chi^\pi$, ~$\pi\in H(k,0;n-|\al|)=H(k,0;n-\rho_1)$.
So in particular, since $|H(k,0;n)|$ is polynomially bounded~\cite[Theorem 7.3]{berele}, that sum has
at most polynomially many summands (polynomial in $n=|\lm|$),
and the multiplicities $r(\lm,\al,\pi)$ in that sum are polynomially bounded.
Similarly for the skew shape  $\mu/\al$:  $\chi^{\mu/\al}$
is a sum of $\le$ polynomially many irreducible characters $\chi^\theta$;
again, polynomial in $n=|\lm|$, with polynomially bounded multiplicities.

\medskip
Therefore it suffices to show that for partitions
$\pi,\,\theta\in H(k,0;n-\rho_1)$, each Kronecker coefficient
$\kappa(\pi,\theta, (\rho_2,\ldots,\rho_{k^2}))$ is polynomially bounded.
Repeating one more step, deduce that it suffices to show that for any two partitions
$\tau,\,\omega\in H(k,0;n-(\rho_1+\rho_2))$, the multiplicity~$\kappa(\tau,\omega,(\rho_3,\ldots,\rho_{k^2}))$
is polynomially bounded.

\medskip
Continue! After at most $k^2$ steps we arrive at at-most polynomially many summands
$\kappa(\emptyset,\emptyset,\emptyset)=1$, and the proof is complete.
 \end{proof}
 \subsection{A polynomial lower bound in the strip}\label{lower.3}
 \begin{lem}
 Let $n=k\cdot w$, $\lm=(w,w,\ldots,w)=(w^k)\in H(k,0;n)$, fix $k$ and let $w$ go to infinity (hence also $n$
 goes to infinity). Let $\varepsilon>0$.
 Then there exist partitions $\nu\vdash n$ such that
 \[
 \kappa(\lm,\lm,\nu)\ge n^{(k^2-4)(k^2-1)/4-\varepsilon}.
 \]
 \end{lem}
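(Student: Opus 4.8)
The plan is to produce the required lower bound by exhibiting a single, explicitly constructed component $\nu$ of $\chi^\lm\otimes\chi^\lm$ whose multiplicity $\kappa(\lm,\lm,\nu)$ can be bounded below by counting lattice-type configurations. The natural tool is Dvir's recursive formula in its sharp form (not merely the inequality~\eqref{dvir.10} but the underlying identity from~\cite[Theorem 2.3]{dvir}), which for a partition $\nu$ with $\nu_1$ chosen maximal among components relates $\kappa(\lm,\lm,\nu)$ to the inner product $\langle \chi^{\lm/\al}\otimes\chi^{\lm/\al},\chi^{(\nu_2,\nu_3,\dots)}\rangle$ summed over $\al\vdash\nu_1$, $\al\subseteq\lm$. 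When $\nu_1$ equals the largest part occurring, the negative correction term vanishes and the inequality is an equality. Since $\lm=(w^k)$ is rectangular, the condition $\al\subseteq\lm$ just says $\al$ has at most $k$ parts each of size at most $w$, and when $|\al|=\nu_1$ is forced to equal $kw$ minus a controlled amount, the number of such $\al$ is itself of polynomial order in $n=kw$ of degree roughly $k-1$ (a Gelfand--Tsetlin / lattice-point count).

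First I would choose $\nu$ so that iterating Dvir's formula peels off the rows of $\rho$ one at a time, at each stage keeping both skew shapes inside a $k$-row strip and with the "inner" partitions ranging over a polynomially large set. Concretely, take $\nu$ to be (a suitable staircase-like partition with $k^2$ rows whose first part is as large as possible, so that at step $i$ of the recursion the relevant count of subpartitions contributes a factor of polynomial degree about $k^2-i$ in $n$); one then telescopes over the $k^2-1$ recursion steps. Second, I would track, at each step, not just one summand but the full multiset of pairs of skew characters $\chi^{\pi/\beta}$, $\chi^{\theta/\beta}$ produced, using Corollary~\ref{pb.1} to keep their number and their internal multiplicities polynomially controlled, and using the rectangular shape to guarantee that the count of admissible $\beta$ at each stage is bounded below (not just above) by a polynomial of the claimed degree. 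Third, multiplying the lower bounds from the $k^2-1$ successive stages yields a total degree which, after the arithmetic $\sum_{i=2}^{k^2}(k^2-i)=\binom{k^2-1}{2}$-type summation adjusted for the loss of one dimension per step, comes out to $(k^2-4)(k^2-1)/4$; the $\varepsilon$ absorbs lower-order terms, constants, and the gap between "polynomially many lattice points" and "at least $n^{d-\varepsilon}$ lattice points" (the latter holding because the number of integer points in a $d$-dimensional simplex of side $\Theta(n)$ is $\Theta(n^d)$).

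The main obstacle I anticipate is the \emph{lower} bound at each recursion step: Dvir's formula is a sum of inner products, and inner products of tensor products of characters need not grow — individual terms could cancel contributions or simply be small. So the delicate part is choosing $\lm$ rectangular and $\nu$ with exactly the right shape so that at every stage a genuinely large family of admissible inner partitions $\al$ each contributes a \emph{nonzero} (hence $\ge 1$) term to the same target multiplicity, with no cancellation — this requires showing $\chi^{(\nu_2,\nu_3,\dots)}$ actually appears in $\chi^{\lm/\al}\otimes\chi^{\lm/\al}$ for each such $\al$, which one can arrange by making the $\nu/\text{(part)}$ tails long enough that Young's rule / the trivial-character branch always applies. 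A secondary bookkeeping obstacle is verifying the exact exponent $(k^2-4)(k^2-1)/4$ rather than a nearby value: this is the place where the "$-1$ per step" dimension loss and the starting index of the summation must be pinned down carefully, but it is a routine (if fussy) computation once the combinatorial scheme above is in place, and any slack is swallowed by $\varepsilon$.
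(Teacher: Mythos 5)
Your proposal is not a proof but a plan, and the plan has genuine gaps at exactly the points you yourself flag as ``the delicate part.'' First, you never actually specify the partition $\nu$: ``a suitable staircase-like partition with $k^2$ rows whose first part is as large as possible'' is not a construction, and the whole argument hinges on being able to verify, for that specific $\nu$ and for polynomially many inner partitions $\al$ at every one of the $k^2-1$ stages, that $\chi^{(\nu_2,\nu_3,\ldots)}$ genuinely occurs in $\chi^{\lm/\al}\otimes\chi^{\lm/\al}$. Second, Dvir's identity is used throughout the paper only as an \emph{upper} bound precisely because of the negative correction term; your remark that the correction vanishes when $\nu_1$ is maximal does not survive iteration, and you give no mechanism for propagating a lower bound through the recursion. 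Third, the exponent arithmetic does not check out even heuristically: the sum you write, $\sum_{i=2}^{k^2}(k^2-i)=\binom{k^2-1}{2}=(k^2-1)(k^2-2)/2$, is not $(k^2-4)(k^2-1)/4$, and ``adjusted for the loss of one dimension per step'' is doing all the work without any computation behind it. The target exponent cannot be recovered by absorbing the discrepancy into $\varepsilon$, since $\varepsilon$ only covers lower-order corrections, not a mismatch in the leading power.

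The paper's proof is entirely different and much simpler: it is a non-constructive averaging argument. One takes degrees in $\chi^\lm\otimes\chi^\lm=\sum_{\nu\in H(k^2,0;n)}\kappa(\lm,\lm,\nu)\chi^\nu$, so $(f^\lm)^2=\sum_\nu\kappa(\lm,\lm,\nu)f^\nu$. By Stirling, $f^{(w^k)}\simeq A\,n^{-(k^2-1)/2}k^n$, so the left side is $\simeq A^2 n^{-(k^2-1)}k^{2n}$; by the asymptotics of~\cite{regev2}, $\sum_{\nu\in H(k^2,0;n)}f^\nu\simeq B\,n^{-k^2(k^2-1)/4}k^{2n}$. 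If every $\kappa(\lm,\lm,\nu)$ were below $n^{g}$ with $g=(k^2-4)(k^2-1)/4-\varepsilon$, comparing the two asymptotics forces $C\,n^{\varepsilon}<1$ for large $n$, a contradiction. This explains where the exponent comes from --- it is exactly $k^2(k^2-1)/4-(k^2-1)$ --- and it sidesteps every obstacle your constructive scheme runs into. If you want to pursue a constructive proof you would need to supply the explicit $\nu$, a genuine lower-bound version of the recursion, and a correct count; as written, the argument does not establish the lemma.
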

 \begin{proof}
 Since $k$ is fixed and $w$ goes to infinity, by Stirling's formula, for some constant $A$
 \begin{eqnarray}\label{lower.1}
 f^\lm\simeq A\cdot\left( \frac{1}{n} \right)^{(k^2-1)/2}\cdot k^n,\quad\mbox{hence}\quad
 (f^\lm)^2\simeq A^2\cdot\left( \frac{1}{n} \right)^{k^2-1}\cdot k^{2n}.
 \end{eqnarray}
 By~\eqref{general.2}  $\chi^\lm\otimes\chi^\lm$ is supported on $H(k^2,0)$, therefore
 \[
 \chi^\lm\otimes\chi^\lm=\sum_{\nu\in H(k^2,0;n)}\kappa(\lm,\lm,\nu)\cdot
 \chi^\nu,\quad\mbox{so taking degrees we have}\quad
 (f^\lm)^2=\sum_{\nu\in H(k^2,0;n)}\kappa(\lm,\lm,\nu)\cdot f^\nu.
 \]
 Denote $g=(k^2-4)(k^2-1)/4-\varepsilon$ and assume all $\kappa(\lm,\lm,\nu) < n^g$. Then
 \[
 (f^\lm)^2<n^g\cdot \sum_{\nu\in H(k^2,0;n)}f^\lm.
 \]
 By~\cite{regev2}
 \[
 \sum_{\nu\in H(k^2,0;n)}f^\lm\simeq B\cdot\left(\frac{1}{n}  \right)^{k^2(k^2-1)/4}\cdot k^{2n},
 \]
 for some constant $B$, so
 \begin{eqnarray}\label{lower.2}
 (f^\lm)^2<n^g\cdot B\cdot\left(\frac{1}{n}  \right)^{k^2(k^2-1)/4}\cdot k^{2n}.
 \end{eqnarray}
 Combining~\eqref{lower.1} and~\eqref{lower.2}, deduce that for sufficiently large $n$,
 \[
 A^2\cdot\left( \frac{1}{n} \right)^{k^2-1}\cdot k^{2n}
 <n^g\cdot B\cdot\left(\frac{1}{n}  \right)^{k^2(k^2-1)/4}\cdot k^{2n}.
 \]
 Forming $l.h.s.\,/\,r.h.s$, deduce that for the constant $C=A^2/B$
 \[
 C\cdot n^{(k^2-4)(k^2-1)/4-g}=C\cdot n^\varepsilon<1
 \]
 for all large $n$, which is a contradiction.

 \end{proof}

\section{Polynomial upper bounds for the Kronecker multiplicities in the $(k,\ell)$-hook}\label{hook}

In this section we prove Theorem~\ref{main.theorem.2}. The proof is a hook
generalization of the proof of Theorem~\ref{main.theorem}

\medskip

Both Lemma~\ref{october.13} and Corollary~\ref{pb.1} hold in the vertical strip $H(0,\ell)$, by essentially
the same -- but conjugate -- arguments. To prove~\ref{october.13} we decomposed $\mu$ into $(\mu_1)$ and
 $\bar\mu=(\mu_2,\mu_3,\ldots)$ (namely -- first row, then the rest of $\mu$),
then applied Young's rule and the fact that $\chi^\mu$ is a component of 
$\chi^{\bar\mu}\hat\otimes\chi^{\mu_1}$.
To prove the "vertical" version of Lemma~\ref{october.13}, decompose $\mu $ into its first {\it column}
and the rest: $(1^{{\mu_1}'})$
and ${\tilde \mu}=(\mu_1-1,\mu_2-1,\ldots )$, then by the vertical Young rule
$\chi^\mu$ is a component of $\chi^{\tilde\mu}\hat\otimes \chi^{(1^{{\mu_1}'})}$. 
The rest of the arguments are the same,
yielding the vertical versions of
Lemma~\ref{october.13} and  of Corollary~\ref{pb.1}. For example, Corollary~\ref{pb.2} below is the
vertical version of Corollary~\ref{pb.1}.
\begin{cor}\label{pb.2}
Given $\ell$, there exist $a=a(\ell)$, $b=b(\ell)$ satisfying the following condition:
Let $\lm\in H(0,\ell)$, let $\al\subseteq \lm$ and write 
$\chi^{\lm/\al}=\sum_\rho r(\lm,\al,\rho)\cdot\chi^\rho$,
then all $r(\lm,\al,\rho)\le a\cdot |\lm|^b,$  namely the multiplicities $r(\lm,\al,\rho)$ 
are polynomially bounded.
 Moreover $r(\lm,\al,\rho)=0$ if $\ell(\rho)>\ell(\lm)$, or similarly  if $\rho_1>\lm_1$.
In particular $\chi^{\lm/\al}=\sum_{\rho \in H(0,\ell)}r(\lm,\al,\rho)\cdot\chi^\rho$.
\end{cor}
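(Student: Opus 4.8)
The plan is to run the proof of Corollary~\ref{pb.1} essentially verbatim, replacing every ``row'' statement by the corresponding ``column'' statement and feeding in the vertical analogue of Lemma~\ref{october.13} obtained in the preceding discussion (the one gotten by splitting $\mu$ into its first column $(1^{\mu_1'})$ and the rest $\tilde\mu=(\mu_1-1,\mu_2-1,\ldots)$ and invoking the vertical Young rule). Equivalently, and more quickly, one conjugates: tensoring the defining relation~\eqref{october.30} on $S_{|\lambda|+|\alpha|}$ with the sign character and using the standard identity $\chi^\nu\otimes\sgn=\chi^{\nu'}$ gives $r(\lambda,\alpha,\rho)=r(\lambda',\alpha',\rho')$; since $\lambda\in H(0,\ell)$ iff $\lambda'\in H(\ell,0)$, with $|\lambda'|=|\lambda|$ and $\alpha'\subseteq\lambda'$, the bound and the two vanishing claims for $(\lambda,\alpha,\rho)$ are exactly those of Corollary~\ref{pb.1} applied to $(\lambda',\alpha',\rho')$, noting that ``$\ell(\rho')>\ell(\lambda')$'' reads ``$\rho_1>\lambda_1$'' and ``$\rho_1'>\lambda_1'$'' reads ``$\ell(\rho)>\ell(\lambda)$''.

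Carrying out the direct argument: fix $\ell$ and let $a=a(\ell,\ell)$, $b=b(\ell,\ell)$ be the constants supplied by the vertical analogue of Lemma~\ref{october.13} with $\ell_1=\ell_2=\ell$. As in Corollary~\ref{pb.1}, by~\eqref{october.14} we have $r(\lambda,\alpha,\rho)=\langle\chi^{\lambda/\alpha},\chi^\rho\rangle=\langle\chi^\lambda,\chi^\alpha\hat\otimes\chi^\rho\rangle$. First I would dispose of the vanishing. If $\chi^\theta$ is a component of $\chi^\alpha\hat\otimes\chi^\rho$ then by Remark~\ref{preliminaries.1}.4 we get $\ell(\theta)\ge\ell(\rho)$, and by the column version of that remark---which is the same statement applied to the conjugate characters, since $\chi^{\theta'}$ is then a component of $\chi^{\alpha'}\hat\otimes\chi^{\rho'}$---also $\theta_1\ge\rho_1$. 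Hence if either $\ell(\rho)>\ell(\lambda)$ or $\rho_1>\lambda_1$, no component $\theta$ of $\chi^\alpha\hat\otimes\chi^\rho$ can equal $\lambda$, so $r(\lambda,\alpha,\rho)=0$. In particular we may assume $\rho_1\le\lambda_1\le\ell$, so $\rho\in H(0,\ell)$, and since $\alpha\subseteq\lambda$ also $\alpha\in H(0,\ell)$.

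Now $r(\lambda,\alpha,\rho)$ is the multiplicity of $\chi^\lambda$ in $\chi^\alpha\hat\otimes\chi^\rho$ with both $\alpha,\rho\in H(0,\ell)$, so the vertical Lemma~\ref{october.13} gives $r(\lambda,\alpha,\rho)\le a\cdot(|\alpha|+|\rho|)^b=a\cdot|\lambda|^b$, using $|\rho|=|\lambda|-|\alpha|$. This is the asserted polynomial bound, and the final displayed equality $\chi^{\lambda/\alpha}=\sum_{\rho\in H(0,\ell)}r(\lambda,\alpha,\rho)\chi^\rho$ then follows from Remark~\ref{preliminaries.1}.3 together with the vanishing just established. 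I do not expect a genuine obstacle here: the whole content is the conjugate bookkeeping already indicated in the text, and the single point that needs a line of care is the ``column'' form of Remark~\ref{preliminaries.1}.4 (a component $\theta$ of $\chi^\alpha\hat\otimes\chi^\rho$ satisfies $\theta_1\ge\rho_1$), which is precisely $\ell(\theta')\ge\ell(\rho')$ after conjugation.
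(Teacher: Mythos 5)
Your proof is correct and matches the paper's intent exactly: the paper itself only asserts that Corollary~\ref{pb.2} follows ``by essentially the same -- but conjugate -- arguments'' as Corollary~\ref{pb.1}, and your direct argument is precisely that conjugate rerun, with the vanishing claims and the bound $a\cdot(|\al|+|\rho|)^b=a\cdot|\lm|^b$ handled as in the horizontal case. Your observation that tensoring with the sign character gives $r(\lm,\al,\rho)=r(\lm',\al',\rho')$ outright, so that the corollary is literally Corollary~\ref{pb.1} applied to the conjugate triple, is a clean one-line way to make the paper's ``conjugate arguments'' rigorous without redoing the induction.
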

Combining the horizontal and the vertical versions, we deduce the $(k,\ell)$ hook versions. We now state
the $(k,\ell)$ hook version of Corollary~\ref{pb.1}:
\begin{cor}\label{pb.3}
Given $k,\ell$, there exist $a=a(k,\ell)$, $b=b(k,\ell)$ satisfying the following condition:
Let $\lm\in H(k,\ell)$, let $\al\subseteq \lm$ and write 
$\chi^{\lm/\al}=\sum_\rho r(\lm,\al,\rho)\cdot\chi^\rho$,
then all  $r(\lm,\al,\rho)$ satisfy  $r(\lm,\al,\rho)\le a\cdot |\lm|^b,$  namely the multiplicities $r(\lm,\al,\rho)$
are polynomially bounded.
 Moreover $\,r(\lm,\al,\rho)=0$ if $\rho_i>\lm_i$ or $\rho'_i>\lm'_i$ for some $i$.
 %$\ell(\rho)>\ell(\lm)$, or similarly  if $\rho_1>\lm_1$.
In particular, since $\lm\in H(k,\ell)$ this implies that  
$\chi^{\lm/\al}=\sum_{\rho \in H(k,\ell)}r(\lm,\al,\rho)\cdot\chi^\rho$.
\end{cor}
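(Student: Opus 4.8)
The plan is to combine the horizontal result (Corollary~\ref{pb.1}) and the vertical result (Corollary~\ref{pb.2}) by splitting a partition in the hook $H(k,\ell)$ into its ``arm'' part (the first $k$ rows) and its ``leg'' part (the columns below row $k$), and then applying Corollary~\ref{pb.1} to the arm and Corollary~\ref{pb.2} to the leg, patched together by the Littlewood--Richardson rule. Concretely, given $\lm\in H(k,\ell)$ write $\lm=\lm^{(1)}\cup\lm^{(2)}$ where $\lm^{(1)}$ consists of the first $k$ rows of $\lm$ (so $\lm^{(1)}\in H(k,0)$) and $\lm^{(2)}$ is the partition formed by rows $k+1,k+2,\dots$ of $\lm$ (so $\lm^{(2)}$ has at most $\ell$ columns, i.e.\ $\lm^{(2)}\in H(0,\ell)$). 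Similarly decompose the subpartition $\al\subseteq\lm$ as $\al^{(1)}$ (first $k$ rows) and $\al^{(2)}$ (the rest); note $\al^{(1)}\subseteq\lm^{(1)}$ and $\al^{(2)}\subseteq\lm^{(2)}$.

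First I would establish the character identity relating $\chi^{\lm/\al}$ to $\chi^{\lm^{(1)}/\al^{(1)}}$ and $\chi^{\lm^{(2)}/\al^{(2)}}$. Because $\lm^{(1)}$ occupies the top $k$ rows and $\lm^{(2)}$ the rows strictly below, the skew diagram $\lm/\al$ decomposes as a disjoint union (in the sense of rows) of $\lm^{(1)}/\al^{(1)}$ and a vertical shift of $\lm^{(2)}/\al^{(2)}$; there is no horizontal overlap issue because $\lm_{k+1}\le\ell$ forces the leg columns to sit under the first $\ell$ columns, but the key point is only that the two skew shapes occupy disjoint sets of \emph{rows}. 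Hence $\chi^{\lm/\al}$ is a summand (in fact, up to the straightforward row-disjointness, exactly equal) of $\chi^{\lm^{(1)}/\al^{(1)}}\hat\otimes\chi^{\lm^{(2)}/\al^{(2)}}$ --- more precisely, the multiplicities $r(\lm,\al,\rho)$ are obtained by expanding each of the two skew characters via Corollaries~\ref{pb.1} and~\ref{pb.2} and then multiplying out the outer product. I would phrase this using~\eqref{october.14}: $r(\lm,\al,\rho)=\langle\chi^\lm,\chi^\al\hat\otimes\chi^\rho\rangle$, and both $\chi^\lm$ and $\chi^\al\hat\otimes\chi^\rho$ restrict compatibly to the row-splitting.

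Then I would bound: by Corollary~\ref{pb.1} there are at most polynomially many $\pi\in H(k,0;|\lm^{(1)}|-|\al^{(1)}|)$ with $r(\lm^{(1)},\al^{(1)},\pi)\ne 0$, each multiplicity polynomially bounded; by Corollary~\ref{pb.2} likewise at most polynomially many $\sigma\in H(0,\ell)$ with $r(\lm^{(2)},\al^{(2)},\sigma)\ne0$, each polynomially bounded; and for each such pair $(\pi,\sigma)$ the Littlewood--Richardson coefficient $r(\pi,\sigma,\rho)$ is bounded by the number of lattice-word fillings of the skew shape $\rho/\pi$ of content $\sigma$, which since $\ell(\sigma)$ and hence the number of columns added is bounded and $\ell(\pi)\le k$ is itself polynomially bounded in $n$ (one can also simply invoke Lemma~\ref{october.13} after noting $\pi\in H(k,0)$ and $\sigma\in H(0,\ell)\subseteq H(\ell,0)'$ — or re-run the conjugate induction). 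Multiplying three polynomial bounds and summing over polynomially many terms gives $r(\lm,\al,\rho)\le a\cdot|\lm|^b$. The vanishing statement $r(\lm,\al,\rho)=0$ unless $\rho_i\le\lm_i$ and $\rho'_i\le\lm'_i$ for all $i$ follows from the horizontal vanishing of Corollary~\ref{pb.1} applied to the arm (giving $\rho_i\le\lm_i$ for $i\le k$ and the row-length constraint) together with the vertical vanishing of Corollary~\ref{pb.2} applied to the leg (giving the column constraints), since $\chi^\rho$ must appear in the product $\chi^\pi\hat\otimes\chi^\sigma$ with $\pi\subseteq\lm^{(1)}$, $\sigma\subseteq\lm^{(2)}$ in the containment-of-diagrams sense forced by those corollaries; and $\rho_i\le\lm_i$, $\rho'_i\le\lm'_i$ for all $i$ precisely says $\rho\in H(k,\ell)$ when $\lm\in H(k,\ell)$.

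The main obstacle I anticipate is making the row-splitting of the skew character fully rigorous --- in particular checking that $\chi^{\lm/\al}$ really is (a summand of, or equals) $\chi^{\lm^{(1)}/\al^{(1)}}\hat\otimes\chi^{\lm^{(2)}/\al^{(2)}}$ rather than something more complicated, and that the leg part genuinely lands in $H(0,\ell)$ so that Corollary~\ref{pb.2} applies. The cleanest route is probably not to prove a skew-character factorization at all, but instead to mimic the \emph{conjugate} induction used for Corollary~\ref{pb.2} directly in the hook: peel off rows of $\mu$ (resp.\ $\al$) one at a time as long as they belong to the first $k$ rows using the horizontal Young rule and Lemma~\ref{november.3}, then peel off columns using the vertical Young rule, exactly as the paragraph preceding this corollary indicates (``Combining the horizontal and the vertical versions''). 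That avoids any delicate skew-shape combinatorics and reduces everything to the already-proved strip cases.
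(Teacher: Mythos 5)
Your closing fallback --- drop the skew-shape factorization and instead run the row-peeling induction of Lemma~\ref{october.13} followed by its conjugate, column-peeling version --- is precisely the paper's route: the paper's entire proof of this corollary is the remark that ``combining the horizontal and the vertical versions, we deduce the $(k,\ell)$ hook versions,'' so on that path you are aligned with (and no less detailed than) the source. The vanishing statement is also fine: $r(\lm,\al,\rho)=\langle\chi^\lm,\chi^\al\hat\otimes\chi^\rho\rangle\ne 0$ forces $\rho\subseteq\lm$, which is exactly ``$\rho_i\le\lm_i$ and $\rho'_i\le\lm'_i$ for all $i$'' and hence $\rho\in H(k,\ell)$.

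The primary route you spend most of the proposal on, however, contains a step that is false as written. Cutting $\lm/\al$ along the line below row $k$ gives two pieces in disjoint \emph{rows}, but they generally share \emph{columns}, and row-disjointness alone buys nothing: the parenthetical claim that $\chi^{\lm/\al}$ is ``exactly equal'' to $\chi^{\lm^{(1)}/\al^{(1)}}\hat\otimes\chi^{\lm^{(2)}/\al^{(2)}}$, and the assertion that the $r(\lm,\al,\rho)$ are obtained by ``multiplying out the outer product,'' already fail for $\lm=(2,2)\in H(1,2)$, $\al=\emptyset$, $k=1$, where $\chi^{(2)}\hat\otimes\chi^{(2)}=\chi^{(4)}+\chi^{(3,1)}+\chi^{(2,2)}\ne\chi^{(2,2)}$. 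For the upper bound you only need the inequality
\begin{equation*}
r(\lm,\al,\rho)\;\le\;\bigl\langle \chi^{\lm^{(1)}/\al^{(1)}}\hat\otimes\chi^{\lm^{(2)}/\al^{(2)}}\,,\,\chi^\rho\bigr\rangle,
\end{equation*}
i.e.\ Schur-positivity of the difference under a horizontal cut. Restricting semistandard fillings to the two pieces gives monomial-positivity for free, but monomial-positivity does not imply Schur-positivity, so this inequality is a genuine lemma (e.g.\ via restriction of Littlewood--Richardson fillings, where the lattice condition survives on the top piece but must be re-argued for the bottom one) that your sketch does not supply. By contrast, in the peeling argument the only fact of this flavour you need is the standard and easy one for a \emph{straight} shape: $\chi^\mu$ is a component of $\chi^{\mu^{(1)}}\hat\otimes\chi^{\mu^{(2)}}$ when $\mu^{(1)}$ is the first $k$ rows and $\mu^{(2)}$ the rest. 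One last small point worth recording if you write the peeling version out: you need a hook analogue of Lemma~\ref{november.3}, and it does hold, because for $\nu\in H(k,\ell)$ the count $L=\prod_i(\nu_i-\nu_{i+1}+1)$ is still polynomially bounded (the rows below row $k$ contribute a factor at most $2^\ell$ since their successive differences sum to at most $\ell$).
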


We shall need the vertical version of the inequality~\eqref{dvir.10}, which is inequality~\eqref{dvir.14}
below.
Note that the proof of~\cite[Theorem 2.3]{dvir} was based on the decomposition of $\rho$
into its first row $(\rho_1)$
and the rest of the rows  $(\rho_2,\rho_3,\ldots)$. The second main ingredient in the proof
of~\eqref{dvir.10} was
 the horizontal Young rule, which implied that
$\chi^\rho$ is a component of $\chi^{(\rho_1)}\hat\otimes \chi^{(\rho_2,\rho_3,\ldots)}$. 
Conjugate: Decompose $\rho$ as first
column $(1^{\rho'_1})$ and the rest of the columns $(\rho_1-1\rho_2-1,\ldots ).$  By the vertical
Young rule $\chi^\rho$ is a component of  $\chi^{(1^{\rho'_1})}\hat\otimes\chi^{(\rho_1-1\rho_2-1,\ldots )}.$ 
The same arguments that
proved Theorem~\ref{dvir.1} now prove the following theorem.
\begin{thm}\label{dvir.11}
Let $\lm,\mu,\rho\vdash n$, and as in~\eqref{october.9} let
$
\chi^{\lm}\otimes \chi^{\mu}=\sum_{\rho\vdash n} \kappa(\lm,\mu,\rho)\cdot\chi^\rho.
$
Then
\begin{eqnarray}\label{dvir.14}
\kappa(\lm,\mu,\rho)\le \sum_{\al\vdash \rho'_1,~\al\subseteq \lm\cap\mu}
< \chi^{\lm/\al}\otimes \chi^{\mu/\al}\;, \;\chi^{(\rho_1-1,\rho_2-1,\ldots) } >.
\end{eqnarray}
\end{thm}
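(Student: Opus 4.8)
The plan is to mimic, line for line, the proof of Dvir's original inequality~\eqref{dvir.10} (Theorem~\ref{dvir.1}), replacing the row-decomposition of $\rho$ by the column-decomposition. Recall the logic behind~\eqref{dvir.10}: one writes $\chi^\rho$ as a component of the outer product $\chi^{(\rho_1)}\hat\otimes\chi^{(\rho_2,\rho_3,\ldots)}$ (by the horizontal Young rule), uses the adjunction $\langle\chi^\lm\otimes\chi^\mu,\chi^\rho\rangle=\langle\chi^{\lm/\alpha}\otimes\chi^{\mu/\alpha},\,\cdots\rangle$ coming from the interplay of inner and outer products together with the skew-character identity~\eqref{october.14}, and then bounds the resulting Kronecker coefficient by discarding the subtracted term in Dvir's recursion. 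The conjugate statement is obtained by applying the standard conjugation (tensoring with the sign character): the operation $\lm\mapsto\lm'$ sends $\chi^\lm$ to $\chi^{\lm'}=\chi^\lm\otimes\chi^{(1^n)}$, it is an involution on the set of partitions of $n$, it preserves inner tensor products up to a sign twist (which is transparent because $\chi^{(1^n)}\otimes\chi^{(1^n)}=\chi^{(n)}$, the trivial character), and it turns the horizontal Young rule into the vertical Young rule.

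Concretely, here is the sequence of steps I would carry out. First, I would record the conjugation dictionary: $\kappa(\lm,\mu,\rho)=\kappa(\lm',\mu,\rho')=\kappa(\lm,\mu',\rho')$ and, more to the point, $\kappa(\lm,\mu,\rho)=\kappa(\lm,\mu',\rho')$ does \emph{not} hold in general, but $\kappa(\lm,\mu,\rho)=\kappa(\lm',\mu',\rho)$ does, since $(\chi^\lm\otimes\chi^{(1^n)})\otimes(\chi^\mu\otimes\chi^{(1^n)})=\chi^\lm\otimes\chi^\mu$. That is the identity I want. Second, I would apply the already-established inequality~\eqref{dvir.10} to the triple $(\lm',\mu',\rho')$ in place of $(\lm,\mu,\rho)$, obtaining
\[
\kappa(\lm,\mu,\rho)=\kappa(\lm',\mu',\rho')\le\sum_{\al\vdash\rho'_1,\ \al\subseteq\lm'\cap\mu'}\big\langle\chi^{\lm'/\al}\otimes\chi^{\mu'/\al}\,,\,\chi^{(\rho'_2,\rho'_3,\ldots)}\big\rangle.
\]
Third, I would translate each ingredient of the right-hand side back through conjugation: $\al\subseteq\lm'\cap\mu'$ is the same as $\al'\subseteq\lm\cap\mu$ (intersection of diagrams commutes with transposition, which is Remark~\ref{preliminaries.1}.6 read via conjugation), and $\al\vdash\rho'_1$ becomes $\al'\vdash\rho'_1$; the skew character satisfies $\chi^{\lm'/\al}=\chi^{(\lm/\al')'}$ up to the same sign twist, so $\chi^{\lm'/\al}\otimes\chi^{\mu'/\al}$ and $\chi^{\lm/\al'}\otimes\chi^{\mu/\al'}$ have the same decomposition multiplicities after transposing the indexing partitions; and $\chi^{(\rho'_2,\rho'_3,\ldots)}$ is conjugate to $\chi^{(\rho_1-1,\rho_2-1,\ldots)}$, because deleting the first column of $\rho$ is exactly transposing the partition $(\rho'_2,\rho'_3,\ldots)$. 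Reindexing the sum by $\beta=\al'$ then yields exactly~\eqref{dvir.14}.

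The step I expect to be the main obstacle is the bookkeeping of the sign twists in the skew-character identity: $\chi^{\lm/\al}$ is \emph{not} literally $\chi^{(\lm'/\al')}$ but rather its conjugate, and one must check that pairing $\chi^{\lm'/\al}\otimes\chi^{\mu'/\al}$ against $\chi^{(\rho'_2,\ldots)}$ is the same number as pairing $\chi^{\lm/\al'}\otimes\chi^{\mu/\al'}$ against $\chi^{(\rho_1-1,\rho_2-1,\ldots)}$ — the two sign characters one picks up on the left (one from each skew factor, since $\lm/\al$ and $\al$ have complementary sizes within $\lm$) must cancel the one on the right, and this works out only because $|\lm/\al|=|\mu/\al|$ are equal and the inner product $\langle\f\otimes\chi^{(1^m)},\psi\otimes\chi^{(1^m)}\rangle=\langle\f,\psi\rangle$. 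I would verify this cancellation carefully once and then simply remark, as the paper already does, that "the same arguments that proved Theorem~\ref{dvir.1} now prove" the conjugate bound — alternatively, one can bypass conjugation entirely and re-run Dvir's original argument verbatim with "first row of $\rho$" replaced by "first column of $\rho$" and the horizontal Young rule replaced by the vertical one, which is arguably cleaner and avoids all sign-twist bookkeeping.
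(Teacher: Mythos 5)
Your primary route (conjugation) breaks at its first displayed line. Conjugating a partition twists the character by the sign, $\chi^{\lm'}=\chi^\lm\otimes\chi^{(1^n)}$, so in $\kappa(\lm,\mu,\rho)=\frac{1}{n!}\sum_{\sg\in S_n}\chi^\lm(\sg)\chi^\mu(\sg)\chi^\rho(\sg)$ conjugating an \emph{even} number of the three arguments preserves $\kappa$, while conjugating all three multiplies the summand by $\chi^{(1^n)}(\sg)^3=\chi^{(1^n)}(\sg)$ and gives $\kappa(\lm',\mu',\rho')=\kappa(\lm,\mu,\rho')$, not $\kappa(\lm,\mu,\rho)$ (e.g.\ $\kappa((2),(2),(2))=1$ but $\kappa((1^2),(1^2),(1^2))=0$). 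Your ``dictionary'' sentence is also internally inconsistent: it asserts $\kappa(\lm,\mu,\rho)=\kappa(\lm,\mu',\rho')$ and then denies it (it does hold), and the identity you settle on, $\kappa(\lm,\mu,\rho)=\kappa(\lm',\mu',\rho)$, leaves $\rho$ unconjugated, so feeding that triple into~\eqref{dvir.10} peels off the first \emph{row} of $\rho$ and yields nothing new. To place $\rho'$ in the third slot legitimately you must conjugate exactly one of $\lm,\mu$; translating the bound for $\kappa(\lm',\mu,\rho')$ back through $\chi^{\lm'/\al}=\chi^{(\lm/\al')'}$ then gives $\sum_{\al\vdash\rho'_1,\ \al\subseteq\lm,\ \al'\subseteq\mu}\lan\chi^{\lm/\al}\otimes\chi^{\mu/\al'},\chi^{(\rho_1-1,\rho_2-1,\ldots)}\ran$, in which the two skew factors are cut by \emph{conjugate} shapes $\al$ and $\al'$, not by a common $\al\subseteq\lm\cap\mu$.

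The same asymmetry appears in your fallback route (which is what the paper actually does): restricting $\chi^\lm\otimes\chi^\mu$ to $S_d\times S_{n-d}$ with $d=\rho'_1$ and pairing the $S_d$-factor with $\chi^{(1^d)}$ gives $\lan\chi^\al\otimes\chi^\beta,\chi^{(1^d)}\ran=\delta_{\al,\beta'}$, whereas the row version pairs with $\chi^{(d)}$ and gets $\delta_{\al,\beta}$. So neither route produces~\eqref{dvir.14} in the symmetric form stated; what actually comes out is the $\al$--versus--$\al'$ inequality above. The discrepancy is not cosmetic: for $\lm=(n)$, $\mu=(1^n)$, $\rho=(1^n)$ one has $\kappa(\lm,\mu,\rho)=1$, yet the right-hand side of~\eqref{dvir.14} is an empty sum, since no $\al\vdash n$ fits inside $\lm\cap\mu=(1)$. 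Your closing paragraph on ``sign twists cancelling'' is precisely where this hides: the two twists coming from $\chi^{\lm'/\al}$ and $\chi^{\mu'/\al}$ cancel \emph{each other}, leaving the twist on the third argument, $\chi^{(\rho'_2,\rho'_3,\ldots)}=\chi^{((\rho_1-1,\rho_2-1,\ldots))'}$, uncancelled. The asymmetric inequality is the one you should prove; it serves the application in Section~\ref{hook} equally well, since the number of admissible $\al$ and the polynomial bounds on the skew multiplicities are unaffected.
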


We later use the obvious fact that if $\rho\in H(0,\ell)$ then $(\rho_1-1,\rho_2-1,\ldots)\in H(0,\ell-1)$.

\medskip

We also need the fact that if $\lm,\mu\in H(k,\ell)$ then all the components of $\chi^{\lm}\otimes \chi^{\mu}$ are in
the $(k^2+\ell^2,2k\ell)$ hook; namely $\kappa(\lm,\mu,\rho)=0 $ if $\rho\not\in H(k^2+\ell^2,2k\ell)$,
see~\cite[Theorem 3.26.a]{berele}.

\medskip
{\bf Analyze} the proof of Theorem~\ref{main.theorem}. That proof describes one step (which we call 
a "D-step") in which Theorem~\ref{dvir.1} is applied to replace $\kappa(\lm,\mu,(\rho_1,\rho_2,\ldots))$
by at most polynomially many summands
of the form $\kappa(\pi,\theta,(\rho_2,\rho_3,\ldots))$. Then D-steps are applied repeatedly,
removing more and more rows of $\rho$.
 Since
$\rho\in H(k^2,0)$, after at most $k^2$ D-steps the process stops and we are done.

\medskip
Similarly, by applying Theorem~\ref{dvir.11} we have the (conjugate) D'-step: this
replaces the term $\kappa(\lm,\mu,(\rho_1,\rho_2,\ldots))$
by at most polynomially many summands
of the form $\kappa(\pi,\theta,(\rho_1-1,\rho_2-1,\ldots))$.
Thus, given $\kappa(\lm,\mu,\rho)$,
%note that a D-step removes the first row of $\rho$, while
a D'-step removes the first column of $\rho$. The crucial fact here is, that the condition of
polynomially bounded is preserved in ether a D or a D' step.

\medskip
{\bf The proof of Theorem~\ref{main.theorem.2}}
\begin{proof}
Start with $\kappa(\lm,\mu,\rho)$.
Since $\lm,\mu\in H(k,\ell)$, by Proposition~\ref{sheli} we can assume that
$\rho\in H(k^2+\ell^2,2k\ell)$. Apply D-steps repeatedly
until $\rho$  is replaced by $\rho^*$ where $\rho^*\in H(0,2k\ell)$.
Since $\rho\in H(k^2+\ell^2,2k\ell)$, this happens after at most $k^2+\ell^2$ D-steps. Now apply
 D'-steps repeatedly until  $\kappa(\emptyset,\emptyset,\emptyset)=1$ is reached. 
 Since $\rho^*\in H(0,2k\ell)$,
this happens after at most $2k\ell$ ~D'-steps. Thus, after a total of at most
$(k^2+\ell^2)+2k\ell=(k+\ell)^2$  steps
of types D and D', we arrive at at-most polynomially many summands, each equals 
$\kappa(\emptyset,\emptyset,\emptyset)=1$, and that completes the proof.
\end{proof}

\section{Outside the hook}\label{outside}

We now give examples outside the hook, were the
Littlewood-Richardson and the Kronecker multiplicities are {\it not}
polynomially bounded.
\subsection{ A $\sqrt{n!}$ lower bounds for some Kronecker multiplicities}
\begin{example}

Here we show that outside the hook, as $n$ goes to infinity some Kronecker multiplicities
grow at least as fast as $(n/e)^{n/2}$ namely as $\sqrt{n!}$.
In particular these  multiplicities are not polynomially bounded.
So let $\varepsilon>0$, assume that as $n$ goes to infinity all Kronecker multiplicities
$\kappa(\lm,\mu,\nu)$ satisfy
\begin{eqnarray}\label{28}
\kappa(\lm,\mu,\nu)<\left( \frac{n}{e}\right)^{\frac{n}{2}(1-\varepsilon)},
\end{eqnarray}
and we derive a contradiction.

\medskip
As usual we denote $\deg(\chi^\lm)=f^\lm$.
Let  $\lm=\mu\vdash n$ be the Vershik-Kerov Logan-Shepp partition which
maximizes $f^\lm$~\cite{logan},~\cite{vershik}.
It is known that for these partitions $\lm$ there exist constants $c_0,\,c_1>0$ such that
\[
e^{-c_1\sqrt{n}}\cdot \sqrt{n!}\le \deg(\chi^\lm)\le e^{-c_0\sqrt{n}}\cdot \sqrt{n!},
\]
and by a slight abuse of notations we write
\begin{eqnarray}\label{19}
\deg(\chi^\lm)\simeq e^{-c\sqrt{n}}\cdot \sqrt{n!}.
\end{eqnarray}
%$\deg \lm=f^\lm$ is close to $\sqrt{n!}$,
By squaring we similarly have
\begin{eqnarray}\label{20}
\deg (\chi^\lm\otimes\chi^\lm)=((\deg(\chi^\lm))^2\simeq e^{-C\sqrt{n}}\cdot n!,
\end{eqnarray}
 $C=2c>0$ a constant.

 \medskip
On the other hand assumption~\eqref{28} implies that
\begin{eqnarray}\label{21}
\deg (\chi^\lm\otimes\chi^\lm)< \left( \frac{n}{e}\right)^{\frac{n}{2}(1-\varepsilon)}
\sum_{\nu\vdash n}\deg(\chi^\nu).
\end{eqnarray}

It follows from the RSK correspondence~\cite{stanley} that the sum
$\sum_{\nu\vdash n}\deg(\chi^\nu)$ equals $T_n$, the number of involutions
in $S_n$. It was proved in~\cite{herstein} that
\[
%\sum_{\nu\vdash n}f^\nu=
T_n\simeq\frac{(n/e)^{n/2}\cdot e^{\sqrt{n}}}{\sqrt{2}\cdot e^{1/4}}.
\]
By Stirling's formula \[
\left(\frac{n}{e}  \right)^n\simeq\frac{1}{\sqrt{\pi n}}\cdot n!
\]
hence
\[
T_n=\sum_{\nu\vdash n}\deg(\chi^\nu)\simeq \frac{e^{\sqrt{n}}\cdot\sqrt{n!}}{({\pi n})^{1/4}\cdot q}
\qquad\mbox{where}\quad
q=\sqrt{2}\cdot e^{1/4}.
\]
so
\begin{eqnarray}\label{22}
T_n<e^{\sqrt{n}}\cdot\sqrt{n!}
\end{eqnarray}
By~\eqref{20},~\eqref{21} and~\eqref{22}
\begin{eqnarray}\label{31}
e^{-C\sqrt{n}}\cdot n!< \left( \frac{n}{e}\right)^{\frac{n}{2}(1-\varepsilon)}
%\left( \frac{n}{e}-\varepsilon\right)^{n/2}
e^{\sqrt{n}}\cdot\sqrt{n!}
\qquad\mbox{so}\qquad
 \sqrt{n!}< \left( \frac{n}{e}\right)^{\frac{n}{2}(1-\varepsilon)}
 %\left( \frac{n}{e}-\varepsilon\right)^{n/2}
 \cdot e^{(C+1)\sqrt{n}}.
\end{eqnarray}
From Stirling's formula deduce that $(n/e)^{n/2}<\sqrt{n!}$, therefore~\eqref{31} implies that
\begin{eqnarray}\label{32}
\left( \frac{n}{e}\right)^{n/2}<\left( \frac{n}{e}\right)^{\frac{n}{2}(1-\varepsilon)}\cdot e^{(C+1)\sqrt{n}},
\quad\mbox{hence}\quad \left( \frac{n}{e}\right)^{\frac{n}{2}\varepsilon}<e^{(C+1)\sqrt{n}}.
\end{eqnarray}
This is a contradiction since the right hand side is sub-exponential, while the
left hand side is essentially $(\sqrt{n!})^{\varepsilon}$, which grows to
infinity faster than any exponential.
\end{example}
We conclude this section with the following conjecture.

\begin{conjecture}
For partitions of $n$ ~$\lm,\mu,\rho\vdash n$,
all Kronecker multiplicities $\kappa(\lm,\mu,\rho)$ are bounded above by $\sqrt{n!}$.
\end{conjecture}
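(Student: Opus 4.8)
The plan is to derive the conjectured bound from two completely standard facts, with no heavy machinery needed: the invariance of $\kappa(\lm,\mu,\rho)$ under permuting its three arguments, and the trivial inequality obtained by comparing degrees. So first I would record that, since every irreducible character of $S_n$ is real (indeed integer valued), $\chi^\rho(\sg)=\overline{\chi^\rho(\sg)}$ for all $\sg\in S_n$, and therefore
\[
\kappa(\lm,\mu,\rho)=\langle\chi^\lm\otimes\chi^\mu,\chi^\rho\rangle
=\frac{1}{n!}\sum_{\sg\in S_n}\chi^\lm(\sg)\,\chi^\mu(\sg)\,\chi^\rho(\sg),
\]
an expression that is visibly unchanged under any permutation of $\lm,\mu,\rho$.

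Next I would use the degree identity: taking degrees in $\chi^\lm\otimes\chi^\mu=\sum_{\rho\vdash n}\kappa(\lm,\mu,\rho)\,\chi^\rho$ gives $f^\lm f^\mu=\sum_{\rho\vdash n}\kappa(\lm,\mu,\rho)\,f^\rho$, and since every summand is non-negative we get $\kappa(\lm,\mu,\rho)\,f^\rho\le f^\lm f^\mu$, i.e. $\kappa(\lm,\mu,\rho)\le f^\lm f^\mu/f^\rho$. By the symmetry established above the same bound holds with the roles of $\lm,\mu,\rho$ cyclically permuted, so multiplying the three inequalities $\kappa\le f^\lm f^\mu/f^\rho$, $\kappa\le f^\mu f^\rho/f^\lm$ and $\kappa\le f^\rho f^\lm/f^\mu$ yields
\[
\kappa(\lm,\mu,\rho)^3\le f^\lm f^\mu f^\rho .
\]
Finally, from $\sum_{\nu\vdash n}(f^\nu)^2=|S_n|=n!$ one gets $f^\nu\le\sqrt{n!}$ for every $\nu\vdash n$, so $\kappa(\lm,\mu,\rho)^3\le (n!)^{3/2}$ and hence $\kappa(\lm,\mu,\rho)\le\sqrt{n!}$, which is exactly the asserted bound (and in fact the argument gives the sharper $\kappa(\lm,\mu,\rho)\le(f^\lm f^\mu f^\rho)^{1/3}$).

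Because this argument is so short, there is no real ``main obstacle'' here: the only points needing a word of justification are the reality of the characters of $S_n$, which underlies the three-fold symmetry of $\kappa$, and the degree identity, both of which are textbook facts. I would add, however, that the elementary bound $\sqrt{n!}$ is surely far from the truth; pinning down the actual asymptotic size of $\max_{\lm,\mu,\rho\vdash n}\kappa(\lm,\mu,\rho)$ -- the question one really wants answered -- is not touched by this estimate and remains the genuinely interesting open problem behind the conjecture.
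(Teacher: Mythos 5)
Your argument is correct, and it is worth emphasizing that the statement you were asked about is stated in the paper only as a \emph{conjecture} -- the paper offers no proof of it at all. Each step of your proposal checks out: the characters of $S_n$ are integer-valued, so $\kappa(\lm,\mu,\rho)=\frac{1}{n!}\sum_{\sg\in S_n}\chi^\lm(\sg)\chi^\mu(\sg)\chi^\rho(\sg)$ is indeed invariant under permuting $\lm,\mu,\rho$; evaluating $\chi^\lm\otimes\chi^\mu=\sum_\rho\kappa(\lm,\mu,\rho)\chi^\rho$ at the identity gives $\kappa(\lm,\mu,\rho)f^\rho\le f^\lm f^\mu$ because all terms are non-negative; multiplying the three cyclic versions gives $\kappa(\lm,\mu,\rho)^3\le f^\lm f^\mu f^\rho$; and $\sum_\nu (f^\nu)^2=n!$ forces $f^\nu\le\sqrt{n!}$, whence $\kappa(\lm,\mu,\rho)\le\sqrt{n!}$. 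So your short, elementary argument (using only the $S_3$-symmetry of the Kronecker coefficient and a degree count) actually settles the conjectured bound, and even yields the stronger inequality $\kappa(\lm,\mu,\rho)\le(f^\lm f^\mu f^\rho)^{1/3}\le\min\{f^\lm,f^\mu,f^\rho\}^{1/3}\cdot\max\{f^\lm,f^\mu,f^\rho\}^{2/3}$. This sits consistently with the paper's Section 6, where the author shows that some Kronecker multiplicities grow at least like $(n/e)^{n(1-\varepsilon)/2}$, i.e.\ essentially like $\sqrt{n!}$; your upper bound shows that lower bound is essentially optimal. Your closing remark is also apt: the bound $\sqrt{n!}$ is the easy half, and the genuinely open problem is the precise asymptotics of $\max_{\lm,\mu,\rho\vdash n}\kappa(\lm,\mu,\rho)$, which this estimate does not address.
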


\subsection{Exponential lower bound for some Littlewood-Richardson multiplicities}
\begin{example}
Based on that same Vershik-Kerov Logan-Shepp partitions $\lm$,
we now give an example where the Littlewood-Richardson multiplicities are not
bounded by exponential growth $a^n$ for any $a<2$.
Assume that as $n$ goes to infinity, the
 Littlewood-Richardson multiplicities in $\lm\hat\otimes\lm$ indeed are bounded by $a^n$ for some $a<2$.
 Then for large $n$
 \[
 \deg(\chi^\lm\hat\otimes\chi^\lm)< a^n\sum_{\nu\vdash 2n}\deg(\chi^\nu).
 \]
 Replacing $n$ by $2n$ in~\eqref{22} we have
 \begin{eqnarray}\label{23}
 \sum_{\nu\vdash 2n}\deg(\chi^\nu)\simeq \frac{e^{\sqrt{2n}}\cdot\sqrt{(2n)!}}{({2 \pi n})^{1/4}\cdot q}
\quad\mbox{where}\quad
q=\sqrt{2}\cdot e^{1/4}.
 \end{eqnarray}
 Thus for large $n$
 \begin{eqnarray}\label{24}
 \deg(\chi^\lm\hat\otimes\chi^\lm)< a^n\cdot e^{\sqrt{2n}}\cdot \sqrt{(2n)!}.
 %\quad\mbox{for some some sub-exponential function} ~h(x).
 \end{eqnarray}

In general (see for example~\cite{kerber}),
if $\f$ is an $S_m$ character and $\psi$  is an $S_n$ character, then
\[
\deg(\f\hat\otimes\psi)=
{m+n\choose n}\deg(\f)\deg(\psi).
\]

Therefore
\begin{eqnarray}\label{25}
\deg(\chi^\lm\hat\otimes\chi^\lm)=(\deg(\chi^\lm))^2\cdot {2n\choose n}.
\end{eqnarray}
By~\eqref{19} $\deg(\chi^\lm)\simeq e^{-c\sqrt{n}}\cdot \sqrt{n!}$
~hence
\[
\deg(\chi^\lm\hat\otimes\chi^\lm )\simeq e^{-2c\sqrt{n}}\cdot n!\cdot{2n\choose n}=
e^{-2c\sqrt{n}}\cdot \frac{(2n)!}{n!}.
\]
Thus~\eqref{24} and~\eqref{25} imply that
\[
e^{-2c\sqrt{n}}\cdot \frac{(2n)!}{n!}< a^n\cdot e^{\sqrt{2n}}\cdot \sqrt{(2n)!},
\]
so
\begin{eqnarray}\label{26}
\frac{(2n)!}{n!\cdot \sqrt{(2n)!}}< a^n\cdot e^{(2c+2)\sqrt{n}}.
\end{eqnarray}
Squaring both sides we get that
\begin{eqnarray}\label{27}
{2n\choose n }< (a^2)^n\cdot e^{4(c+1)\sqrt{n}}.
\end{eqnarray}
By Stirling's formula
\[
{2n\choose n }\simeq\frac{\sqrt 2}{\sqrt{\pi n}}\cdot 4^n,
\]
hence~\eqref{27} implies that
\[
\left(\frac{4}{a^2}\right)^n<\sqrt{\frac{\pi n}{2}}\cdot e^{4(c+1)\sqrt n}.
\]
Since $a^2<4$, the left hand side grows exponentially with $n$, while the right
grows sub-exponentially, hence a contradiction.
\end{example}

{\bf Amitai Regev}, Mathematics Department, The Weizmann Institute, Rehovot 76100 Israel.

%\medskip
email: amitai.regev at weizmann.ac.il


\begin{thebibliography}{99}

\bibitem{berele} A. Berele and A. Regev, Hook Young diagrams with applications to
combinatorics and to representations of Lie superalgebras, {\it Adv. Math.} {\bf 64} (1987) 118-175.

 \bibitem{herstein}  S. Chowla,  I. N. Herstein and W. K. Moore,  On recursions connected
 with symmetric groups. {\it Canadian J. Math.} {\bf 3}, (1951). 328–-334.

\bibitem{dvir} Y. Dvir, On the Kronecker product of $S_n$ characters, {\it J. Algebra}
{\bf 154} (1993), 125-140.

\bibitem{garsia} A. Garsia and J. Remmel, Schuffles of permutations and the Kronecker product,
{\it Graphs Combin.} {\bf 1} (1985), 217-263.



\bibitem{kerber} G. James and A. Kerber, The Representation Theory of the Symmetric Group,
{\it Encyclopedia of Mathematics and its Applications}, Vol {\bf 16}, Cambridge Univ. Press, 1984.

\bibitem{logan} B. F. Logan and L. A. Shepp, A variational problem for random Young tableaux,
{\it Adv. Math.},
{\bf 26} (1977) 206-222.

\bibitem{macdonald} I. G. Macdonald, Symmetric Functions and Hall Polynomials, Second edition, {\it Oxford Mathematical Monographs} 1995.


%\bibitem{regev1} A. Regev, The Kronecker product of $S_n$ characters and an $A\otimes B$ theorem
%for Capelli identities, {\it J. Algebra} {\bf 66} (1980), 505-510.

\bibitem{regev2} A. Regev, Asymptotic values for degrees associated with strips of Young diagrams,
{\it Adv. Math.} {\bf 41} (1981), 115-136.

\bibitem{sagan} B. Sagan, The Symmetric Group, Secon Edition, {\it Springer Graduate Texts in
Mathematics}, 2000.

\bibitem{stanley} R. P. Stanley, Enumerative Combinatorics Vol 2, {\it Cambridge Univ. Press}, 1999.


\bibitem{vershik}
 A. M. Vershik, and  S. V. Kerov,
Asymptotic behavior of the maximum and generic dimensions of irreducible representations
of the symmetric group. (Russian)
{\it Funktsional. Anal. i Prilozhen.} {\bf 19} (1985), no. 1, 25–-36, 96. English translation:
{\it Funct. Anal. Appl.} {\bf 19} (1985), 21--31.
\end{thebibliography}
\end{document}